\crefname{section}{Section}{Sections}
\crefname{subsection}{Section}{Sections}
\crefname{subsubsection}{Section}{Sections}
\title{\sf Hearing the Shape of a Cuboid Room Using Sparse Measure Recovery}
\author{Antoine Deleforge\footnote{Inria, IRMA, Universit\'e de Strasbourg, CNRS UMR 7501, 67084 Strasbourg, France. ({\tt antoine.deleforge@inria.fr})}
\and C\'edric Foy\footnote{UMRAE, Cerema, Univ. Gustave Eiffel, Ifsttar, Strasbourg, 67035, France. ({\tt cedric.foy@cerema.fr})}
\and Yannick Privat\footnote{Universit\'e de Lorraine, CNRS, Institut Elie Cartan de Lorraine, Inria, BP 70239 54506
Vandœuvre-l\`es-Nancy Cedex, France. ({\tt yannick.privat@univ-lorraine.fr}).}~\footnote{Institut Universitaire de France (IUF)}
\and  Tom Sprunck.\footnote{Inria, IRMA, Universit\'e de Strasbourg, CNRS UMR 7501, 67084 Strasbourg, France. ({\tt tom.sprunck@cea.fr})}~\footnote{Corresponding author.}
}
\date{\today}
\newcommand{\avect}{\bm{a}}
\newcommand{\bvect}{\bm{b}}
\newcommand{\evect}{\bm{e}}
\newcommand{\jvect}{\bm{j}}
\newcommand{\nvect}{\bm{n}}
\newcommand{\qvect}{\bm{q}}
\newcommand{\rvect}{\bm{r}}
\newcommand{\uvect}{\bm{u}}
\newcommand{\vvect}{\bm{v}}
\newcommand{\xvect}{\bm{x}}
\newcommand{\alphavect}{\bm{\alpha}}
\newcommand{\varepsilonvect}{\bm{\varepsilon}}
\def\R{\textrm{I\kern-0.21emR}}
\def\N{\textrm{I\kern-0.21emN}}
\def\Z{\mathbb{Z}}
\newcommand{\src}{\rvect^\text{src}}
\renewcommand{\geq}{\geqslant}
\renewcommand{\leq}{\leqslant}
\newcommand{\Reps}{\R^3_\varepsilon}
\newcommand{\Oset}[2]{\mathcal O^{#2}_{#1}}
\newcommand{\mic}{\rvect^\text{mic}}
\newcommand{\dint}[2]{[\![#1 ,#2]\!]}
\newcommand{\M}{\mathcal{M}}
\newcommand{\tvn}[1]{\Vert #1 \Vert_\text{TV}}
\newcommand{\infn}[1]{\Vert #1 \Vert_\infty}
\newcommand{\Ltn}[1]{\left\lVert #1 \right\rVert _2}
\newcommand{\Lon}[1]{\left\lVert #1 \right\rVert _1}
\renewcommand{\geq}{\geqslant}
\renewcommand{\leq}{\leqslant}
\newtheorem{theorem}{Theorem}  
\newtheorem{proposition}{Proposition}
\newtheorem{corollary}{Corollary}
\newtheorem{definition}{Definition}
\newtheorem{lemma}{Lemma}
\newtheorem{remark}[theorem]{Remark} 
\newcommand\ts[1]{{#1}}
\begin{document}

\maketitle

\begin{abstract}
This article explores a variant of Kac's famous problem, ‘‘Can one hear the shape of a drum?'', by addressing a geometric inverse problem in acoustics. Our objective is to reconstruct the shape of a cuboid room using acoustic signals measured by microphones placed within the room. By examining this straightforward configuration, we aim to understand the relationship between the acoustic signals propagating in a room and its geometry. This geometric problem can be reduced to locating a finite set of acoustic point sources, known as image sources. We model this issue as a finite-dimensional optimization problem and propose a solution algorithm inspired by super-resolution techniques. This involves a convex relaxation of the finite-dimensional problem to an infinite-dimensional subspace of Radon measures. We provide analytical insights into this problem and demonstrate the efficiency of the algorithm through multiple numerical examples.
\end{abstract}

\paragraph{Keywords:}inverse problem, acoustics, image source method, Radon measures, super-resolution algorithm

\paragraph{AMS Classification:} 35R30, 35L05, 65K10, 93C20

\section{Introduction}

\subsection{From acoustic measurements to geometric reconstruction}

In his seminal article \cite{kac1966can}, Kac explored the connection between a drum's sound and its geometry. He formulated this question as a spectral geometry problem, and investigated the existence of a one-to-one relationship between a bounded domain and the spectrum of its associated Laplace-Dirichlet operator. In this article, we address a tangential question notably arising in room acoustics, namely, exploring the relationship between the geometry of a 3D enclosure and acoustic signals recorded within this enclosure. In contrast to Kac's formulation, a number of physical constraints intrinsic to acoustic measurements are considered, namely, signals can only be measured within a limited frequency band, over a finite time, at a finite number of spatial locations, and are subject to noise.

The finiteness of measurements makes the general problem of recovering an arbitrary room shape -- an infinite dimensional object -- obviously ill-posed. {To make the unknown finite-dimensional,} we narrow the focus here to \textit{cuboid} rooms, whose shape can be fully described by their length, width and height. We further assume that a point source at an unknown location in the room emits an impulse a time $t=0$, and that the relative positions of measurement microphones are known but not their absolute positions and orientation within the room.

In {the cuboid setting}, the room-shape recovery problem can be recast as that of detecting and localizing a sufficiently large number of so-called \textit{image sources}, that lie outside of the room's boundary. Indeed, an efficient algorithm to recover a cuboid room's geometrical parameters from such an image-source \textit{point cloud} was recently proposed by the authors in \cite{sprunck2024fullreversing}. Hence, this article entirely focuses on the image-source recovery problem itself\footnote{A preliminary study on this problem was recently published by the authors in a short paper \cite{sprunck2022gridless}.}.
%
%
We will explain why adopting this viewpoint amounts to considering a wave equation where the source term is an infinite combination of Dirac measures, each localized at the position $\rvect_k$ of an image source.
In other words, we consider an equation of the form:
\begin{equation}\label{mainEq0}
\frac{1}{c^2}\partial_{tt}p(t,\rvect)-\Delta p(t,\rvect) = \sum_{k=0}^{+\infty} a_k\delta(\rvect-\rvect_k)\delta (t),\qquad (t,\rvect)\in \R_+\times \R^3,
\end{equation}
where p denotes the sound pressure, $c>0$ the acoustic-wave speed, the $a_k$'s are real coefficients, {$\delta(\rvect-\rvect_k)$} denotes the Dirac measure in space at the point $r=\rvect_k$ and {$\delta(t)$} the Dirac measure in time at $t=0$.
For this equation, estimating the source term from pressure measurements amounts to determining both the coefficient and pointwise support of all Dirac measures, which constitutes a challenging, high-dimensional, non-linear, non-convex, inverse problem.


In this article, we aim to achieve several objectives:
\begin{itemize}
\item Model the problem;
\item Analyze the underlying optimization challenges;
\item Develop and propose an efficient resolution algorithm.
\end{itemize}
%
We will show that the problem cannot be easily reformulated as an observability inequality due the following physical constraints:
\begin{itemize}
\item Microphones act as low-pass filters, capturing only discrete low-frequency sound pressure measurements;
\item Measured signals are typically noisy, meaning that what is measured is not the exact solution to the wave equation.
\end{itemize}
To address these issues, we propose modeling the reconstruction problem as an optimization task over the set of Radon measures, incorporating a regularization term that promotes sparsity.

\subsection{State of the art}
The inverse problem of recovering image source locations and amplitudes from measured signals has been the subject of a substantial amount of contributions from the audio signal processing community. The aim is usually to locate reflectors, such as walls, by finding the true source position along with the associated first-order image sources. 
Most approaches consist of three steps: estimating the times and/or directions of arrival of image sources, labeling them, and finally apply a triangulation to recover the location \cite{tervo2010estimation,antonacci2012inference,sun2012localization,mabande2013room,dokmanic2013acoustic,jager2016room,remaggi2016acoustic,el20173d,lovedee2019three, macwilliam2023simultaneous}. 
Alternatively, \cite{ribeiro2011geometrically} proposes a more direct approach that directly operates in 3D space. Retrieving the coefficients $a_k$ associated to reflectors in frequency bands is studied in \cite{shlomo2021blind} and \cite{dilungana2022geometry}, as they relate to their acoustic \textit{impedance}. Finally, recovering image sources within a given range is the focus of recent non-parametric sound-field reconstruction methods \cite{koyama2019sparse, damiano2021soundfield}. 

Most of these references define a discretization in time \cite{tervo2010estimation,sun2012localization,mabande2013room,antonacci2012inference, dokmanic2013acoustic,kowalczyk2013blind, crocco2016estimation, jager2016room,remaggi2016acoustic,el20173d,lovedee2019three,shlomo2021blindloc,dilungana2022geometry}, in 2D space \cite{sun2012localization,mabande2013room,remaggi2018acoustic,koyama2019sparse,damiano2021soundfield} or in 3D space \cite{ribeiro2011geometrically}, and apply peak-picking techniques and/or sparse optimization methods over a finite grid. This approach is flawed by computational constraints in 3D, as the grid size grows cubically with the resolution and limits the accuracy of sparse methods \cite{ribeiro2011geometrically,koyama2019sparse,damiano2021soundfield}. Meanwhile, peak-picking methods for times of arrival estimation often fail when echoes are overlapping and the signal is degraded. This issue is countered in most cases by employing additional assumptions on the source and microphone positions within the room \cite{antonacci2012inference,dokmanic2013acoustic,jager2016room,remaggi2016acoustic,el20173d,lovedee2019three}, which is unsatisfactory in the context of room geometry inference. Finally, sparse optimization over a discrete grid is hindered by the so-called \textit{basis-mismatch} problem \cite{chi2011sensitivity}, which severely reduces the performance of reconstruction algorithms.

Outside this community, the gridless spike recovery problem has been vastly studied theoretically and numerically \cite{de2012exact,candes2014towards,duval2015exact, morgenshtern2016super,denoyelle,traonmilin2020basins}, with notable applications to \textit{super-resolution} in, \textit{e.g.}, fluorescence microscopy \cite{huang2009super, denoyelle}.
The theoretical gridless spike recovery problem can be approached either by Prony's method and its derivatives, such as MUSIC \cite{schmidt1986multiple} (MUltiple SIgnal Classification), ESPRIT \cite{roy1989esprit} (Estimation of Signal Parameters by Rotational Invariance Technique), or by variational methods. While some extensions to noisy data \cite{condat2015cadzow, wagner2021gridless} and multivariate measures \cite{peter2015prony,kunis2016multivariate} were developed, Prony's type methods are better suited to noiseless, 1D measurements.
On the other hand, gridless variational methods aim to resolve optimization problems on a space of measures, without prior knowledge on the number of spikes. These problems can be seen as convex relaxations of similar finite dimension, grid-based problems to infinite dimensional convex optimization problems.
We will focus in our numerical applications on these variational methods, which generalize well to any kind of measurement operator and noise. More precisely, we will consider the relaxed BLASSO optimization problem (\hyperref[eq:relaxed_blasso]{{\mbox{$\mathscr{B}_{\lambda}$}}}), 
which has received considerable attention in recent years, both in theoretical and algorithmic work.
A first theoretical issue is to find conditions to ensure exact support recovery of the ground truth measure in the noiseless case. It then follows naturally to study how the solution of (\hyperref[eq:relaxed_blasso]{{\mbox{$\mathscr{B}_{\lambda}$}}}) with noisy measurements relates to the solution of the noiseless case.
The seminal work of Candès and Fernandes-Granda \cite{candes2014towards} on the 1D low-pass filter vastly contributed to opening the field by proving that exact support recovery could be achieved under a minimum separation constraint between spikes, with latter expansions to noisy measurements \cite{candes2013super, azais2015spike, fernandez2013support}. These latter works provide error bounds on the locations of the recovered spikes, but few guarantees on the structure of the reconstructed measure. Duval and Peyré show in \cite{duval2015exact} that under certain hypotheses on the certificates of the dual problem, the regularization parameter $\lambda$ and the measurement noise, there exists a unique solution to the noisy BLASSO  that contains as many spikes as the input measure. Additionally, the recovered measure converges to the exact measure for the weak-$\ast$ topology when the noise and $\lambda$ vanish to $0$.
Note that a particular case of interest focuses on input measures for which the spikes cluster around a set point. Exact noiseless support recovery and stable noisy reconstruction can be achieved when the amplitudes of the spikes are real and positive under some non-degeneracy condition, see for instance \cite{denoyelle2017support} in 1D or \cite{poon2019multidimensional} for higher dimensions, however in our case the spikes will be well-separated in space. 

Several successful numerical approaches have been developed over the years in order to solve the super-resolution problem off the grid. We can cite amongst these methods the semi-definite programming (SDP) formulation \cite{candes2014towards} and its extension to higher dimensions using Lasserre hierarchy \cite{lasserre2009moments, de2016exact}, optimal transport theory and particle gradient descent \cite{chizat2018global, chizat2022sparse}, over-parametrized projected gradient descent \cite{traonmilin2020projected, benard2022fast} and finally the Frank-Wolfe algorithm (also called the conditional gradient descent) \cite{frank1956algorithm,bredies2013inverse,denoyelle, boyd2017alternating}.


\subsection{Main contributions and organization of the article}

Section~\ref{sect:model} is dedicated to the mathematical formalization of the inverse problem at hand. In Section~\ref{sec:directModel}, we explain how to explicitly solve the wave equation in our setting, and in Section~\ref{sec:modelFiniteD}, we reformulate the problem as a finite-dimensional optimization problem. We will partially analyze this problem in Section~\ref{sec:analysis}, revealing that the problem can be either well-posed or ill-posed depending on the collected data. This insight leads us to consider a slightly modified formulation in Section~\ref{sect:IS_inv_theory} to avoid certain pathologies related to the problem's unique characteristics (particularly the singularities of Green's kernels).

Interestingly, and somewhat paradoxically, we will develop a numerical method based on an equivalent formulation of the optimization problem in infinite dimensions, introduced in in Section~\ref{sect:IPdimInf}. In this approach, the unknowns are the positions of the image sources $\rvect_k$ and the attenuation coefficients $a_k$ (see Equation~\ref{mainEq0}) of the acoustic signal. This method has the advantage of convexifying the problem and will be detailed in Section~\ref{sect:numAlgo}. Section~\ref{sect:IS_inv_exp} is dedicated to numerical experiments that demonstrate the effectiveness of the proposed algorithm.

%
\section{Modeling of the inverse problem}
\label{sect:model}

\subsection{The direct problem}\label{sec:directModel}

Consider a rectangular room $\Omega$ with (positive) dimensions $L_x$, $L_y$, $L_z$, and a sound source positioned at $\src$ within the room. Let $\mic\neq \src$ denote a microphone location distinct from the source position. The {\it Room Impulse Response} (RIR) for this configuration is the signal recorded at microphone $\mic$ when the source emits an ideal impulse at time $t=0$. In other words, a RIR represents the measurement at a given microphone location of the Green's function for the wave equation. A \emph{multi-channel} RIR refers to a collection of RIRs recorded at various microphone locations for a single source position.

The pressure field $p$ resulting from a {\it perfectly impulsive source} located at $ \src$ is a solution to the inhomogeneous wave equation \eqref{eq:eq:wave_neu}:
\begin{equation}
    \left\{\label{eq:eq:wave_neu}
    \begin{array}{ll}
        \frac{1}{c^2}\partial_t^2 p(\rvect,t) - \Delta p(\rvect,t) = \delta(t)\delta(\rvect-\src)& (\rvect,t)\in\Omega\times\R \\
        p(\rvect,t) = \partial_t p (\rvect,t) = 0 & (\rvect,t)\in\Omega\times \R_-^* ,
    \end{array}\right.  
\end{equation}
where $c>0$ is the speed of sound.

The partial absorption and reflection of sound waves at the walls are typically modeled by incorporating admittance boundary conditions on $\partial\Omega$ \cite{bruneau2013fundamentals}: 
\begin{equation}\label{eq:bc_time}
    \partial_{\nvect}p(\rvect,t) + \frac{1}{c}\frac{\partial}{\partial t}\beta(\rvect, \cdot)*p(\rvect,\cdot)(t) = 0 \quad (\rvect,t) \in \partial\Omega\times\R,
\end{equation}
where $\beta$ is the time-domain admittance of the wall and $*$ denotes time-domain convolution. 
In the following, we will consider a modified version of the simplified ideal case with perfectly reflecting walls, which corresponds to setting a constant $\beta(\cdot)$ in Eq.~\eqref{eq:bc_time}, \emph{i.e.}, applying Neumann boundary conditions on $\partial\Omega$. 
The pressure field $p$ is then solution to the following system:
\begin{equation}    \label{eq:wave_neu}
    \left\{
    \begin{array}{ll}
        \frac{1}{c^2}\partial_t^2 p(\rvect,t) - \Delta p(\rvect,t) = \delta(t)\delta(\rvect-\src)& (\rvect,t)\in\Omega\times\R \\
        p(\rvect,t) = \partial_t p (\rvect,t) = 0 & (\rvect,t)\in\Omega\times \R_-^* \\
        \partial_{\nvect }p(\rvect,t) = 0 & (\rvect,t)\in\partial \Omega\times\R. \\
    \end{array}\right .
\end{equation}

\begin{remark}[Equivalence between two formulations of the wave equation: one with a source term and no initial velocity, and the other without a source term but with an initial velocity]
Let us set $\square_c= \frac{1}{c^2}\partial_t^2 - \Delta$ and let $\varphi_1$ denote a smooth function in $\Omega$. The function $p$ is the {\it fundamental solution}, also known as {\it Green kernel} or {\it Green's function} of the wave equation on $\Omega\times \R$ with Neumann boundary conditions. Let us highlight that $p$ also solves an equivalent formulation.
To this aim, we consider $\varphi$, the unique solution of the wave equation  
\begin{equation}
    \left\{
    \begin{array}{ll}
        \square_c \varphi (\rvect,t) = 0 & (\rvect,t)\in\Omega\times\R_+ \\
         \varphi (\rvect,0) =0, \quad  \partial_t  \varphi  (\rvect,0) = \varphi_1 (\rvect) & \rvect \in\Omega \\
        \partial_{\nvect } \varphi (\rvect,t) = 0 & (\rvect,t)\in\partial \Omega\times\R_+ .
    \end{array}\right .
\end{equation}
Let us introduce the function $\psi$ defined by $\psi(\rvect,t)=H(t)\varphi(\rvect,t)$, where $H$ is the so-called Heaviside function. Then, seeing $\psi$ as a distribution, one gets 
\begin{equation}
    \square_c \psi (\rvect,t) =\delta (t)\varphi_1(\rvect), \qquad (\rvect,t)\in\Omega\times\R
\end{equation}
according to the jump rule.
Let $\widetilde p $ the solution of 
\begin{equation}
    \label{eq:wave_neu2}
    \left\{
    \begin{array}{ll}
        \square_c \widetilde p(\rvect,t) = 0 & (\rvect,t)\in\Omega\times\R_+ \\
        \widetilde p(\rvect,0) =0, \quad  \partial_t \widetilde p (\rvect,0) = \delta(\rvect-\src) & \rvect \in\Omega \\
        \partial_{\nvect }\widetilde p(\rvect,t) = 0 & (\rvect,t)\in\partial \Omega\times\R_+ .
    \end{array}\right .
\end{equation}
Using the representation formula through Green kernels and denoting by $\ast_{\rvect}$ the spatial convolution, for any choice of spatial source term $\varphi_1$ the equality $p\ast_{\rvect} \varphi_1=\widetilde {p}\ast_{\rvect} \varphi_1$ stands for positive times, thus $p$ is also solution to System \eqref{eq:wave_neu2}.
\end{remark}

It is notable that the analytical solution to the system \eqref{eq:wave_neu} can be explicitly derived using the \emph{image source method}, a common technique in acoustics for modeling specular reflections, see for instance \cite{Allen1976ImageMF, kuttruff2012room}. It is based on the observation that any specular reflection of an impulsive sound source off a wall can be modeled by introducing a virtual source located outside the domain, which is the symmetric counterpart of the original source with respect to the wall.
Allen and Berkley \cite{Allen1976ImageMF} employed the image-source method to model the full reverberation of an impulse in a rectangular room, facilitating efficient simulation of room impulse responses. Their approach involves iteratively applying the image-source technique, generating virtual sources by reflecting the original source across the walls encountered along a given reflection path. 
As a result, the distributional solution to system \eqref{eq:eq:wave_neu} can be expressed as a series of functions, with each term corresponding to a virtual source.
By adopting a coordinate system aligned with the walls and placing the origin at one of the room's vertices, a straightforward expression for the coordinates of these image sources can be derived, leading to the set of image source points
\begin{equation}\label{eq:IS_coord}
  I_{\Omega} := \{\rvect_{\qvect,\varepsilonvect} = \varepsilonvect \odot \src + 2 \qvect\odot \vvect_L, \quad \varepsilonvect \in \{-1\ts{,}1\}^3,\quad \qvect \in \Z^3\}
\end{equation}
where $\odot$ denotes the Hadamard product\footnote{For two matrices $A$ and $B$ of the same dimension $m \times n$, the Hadamard product $A \odot B$ is a matrix of the same dimension as the operands, with elements given by $(A\odot B)_{ij}=A_{ij}B_{ij}$.}, $\vvect_L=[L_x,L_y,L_z]^\top$ is the room size vector and $\src$ contains the coordinates of the source location. In this setup, $\src$ contains the distances of the source to the walls that define the origin.

Considering all image sources allows us to account for every possible reflection path of the original sound wave on the room's walls, \emph{i.e.} the reverberation. 
In \cite{Allen1976ImageMF}, a formal proof is provided for expressing the solution to \eqref{eq:wave_neu} using the image source method, leading to the following result.
\begin{proposition}[Image source method] \label{prop:IS}
    The solution $p$ to \eqref{eq:wave_neu} in the sense of distributions is given by:
    \begin{equation}
p(\rvect,t) = \sum_{\rvect_{\qvect,\varepsilonvect}\in I_\Omega}  p_{\rvect_{\qvect,\varepsilonvect}}(\rvect,t), \quad \text{with} \quad p_{\rvect_{\qvect,\varepsilonvect}}(\rvect,t)=  \frac{\delta(t-\Ltn{\rvect_{\qvect, \varepsilonvect}-\rvect}/c)}{4\pi \Ltn{\rvect_{\qvect, \varepsilonvect}-\rvect}},\quad (\rvect,t)\in{\R^3\times\R_+}. 
\end{equation}
\end{proposition}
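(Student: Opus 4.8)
The plan is to reduce the statement to a direct verification. Since \eqref{eq:wave_neu} is a well-posed initial--boundary value problem — uniqueness of its causal distributional solution being standard — it suffices to show that the series $\sum_{\rvect_{\qvect,\varepsilonvect}\in I_\Omega}p_{\rvect_{\qvect,\varepsilonvect}}$ defines a distribution on $\Omega\times\R$ and that this distribution satisfies the three lines of \eqref{eq:wave_neu}. The building block is the classical fact that the causal free-space fundamental solution $E(\rvect,t)=\delta(t-\Ltn{\rvect}/c)/(4\pi\Ltn{\rvect})$ obeys $\square_c E=\delta(\rvect)\delta(t)$ on $\R^3\times\R$ and vanishes for $t<0$, so that $\square_c p_{\rvect'}=\delta(\rvect-\rvect')\delta(t)$ and $p_{\rvect'}$ is supported in $\{t\geq0\}$ for every $\rvect'\in I_\Omega$. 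That the series is a genuine distribution, which may be differentiated term by term, follows from \emph{local finiteness}: tested against a function supported in $K\times[0,T]$, only the finitely many image sources $\rvect'$ with $\mathrm{dist}(\rvect',K)\leq cT$ contribute, because $I_\Omega$ is discrete without accumulation point and (for $\src$ in the open box) the parametrization $(\qvect,\varepsilonvect)\mapsto\rvect_{\qvect,\varepsilonvect}$ is injective.

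\emph{Interior equation and initial conditions.} Writing $\Omega=(0,L_x)\times(0,L_y)\times(0,L_z)$, a coordinate-wise check shows that the only image source lying in $\overline{\Omega}$ is $\rvect_{0,(1,1,1)}=\src$: the inclusion $\varepsilon_x s_x+2q_xL_x\in[0,L_x]$ with $q_x\in\Z$, $\varepsilon_x\in\{-1,1\}$ and $0<s_x<L_x$ forces $(\varepsilon_x,q_x)=(1,0)$, and likewise for the $y$ and $z$ components. Hence on $\Omega\times\R$ every term but $p_{\src}$ solves $\square_c p_{\rvect'}=0$, so $\square_c p=\delta(\rvect-\src)\delta(t)$ there. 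Moreover each $p_{\rvect_{\qvect,\varepsilonvect}}(\rvect,\cdot)$ is supported at the single instant $t=\Ltn{\rvect_{\qvect,\varepsilonvect}-\rvect}/c$, which for $\rvect\in\Omega$ is strictly positive unless $\rvect_{\qvect,\varepsilonvect}=\rvect$, the corresponding term being supported in $\{t\geq0\}$ in any case; therefore $p\equiv0$ on $\Omega\times\R_-^*$, which yields $p=\partial_t p=0$ there.

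\emph{Neumann boundary condition.} This is the core of the proof and rests on the reflection symmetry of the image lattice $I_\Omega$. For the wall $\{x=0\}$, the reflection $R\colon(\xi,\eta,\zeta)\mapsto(-\xi,\eta,\zeta)$ maps $I_\Omega$ bijectively onto itself, since $R(\rvect_{\qvect,\varepsilonvect})=\rvect_{\qvect',\varepsilonvect'}$ with $(\varepsilon'_x,q'_x)=(-\varepsilon_x,-q_x)$ and the remaining components unchanged, and $R$ is a fixed-point-free involution on $I_\Omega$. Pairing each $\rvect'\in I_\Omega$ with $R(\rvect')$, one has for any $\rvect$ on $\{x=0\}$ the identities $\Ltn{\rvect'-\rvect}=\Ltn{R(\rvect')-\rvect}$ and $\partial_{\nvect}\Ltn{\rvect'-\rvect}=-\partial_{\nvect}\Ltn{R(\rvect')-\rvect}$ (here $\nvect=-\evect_x$ and the two sources are mirror images in $x$). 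Differentiating $p_{\rvect'}+p_{R(\rvect')}$ in the normal direction then produces, on $\{x=0\}$, terms in $\delta$ and $\delta'$ of the travel time with exactly opposite coefficients, so each pair contributes zero; summing over pairs (legitimate by local finiteness) gives $\partial_{\nvect}p=0$ on $\{x=0\}$. The same argument with $R\colon(\xi,\eta,\zeta)\mapsto(2L_x-\xi,\eta,\zeta)$, for which $(\varepsilon'_x,q'_x)=(-\varepsilon_x,1-q_x)$, and with the analogous reflections in $y$ and $z$, handles the remaining five faces. Together with uniqueness, this proves the proposition.

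\emph{Main obstacle and an alternative route.} The delicate point is the rigorous handling of the boundary condition as a distributional identity: the singular support of $p$ is a locally finite union of spheres $\{\Ltn{\rvect'-\rvect}=ct\}$ that do intersect $\partial\Omega$, so the pairing and cancellation for $\partial_{\nvect}p$ must be run against test functions, tracking the $\delta$- and $\delta'$-contributions and using local finiteness to justify the rearrangement. A computational alternative that sidesteps this is to expand the Green's function of \eqref{eq:wave_neu} in the Neumann eigenbasis of the box (tensor products of cosines $\cos(n_x\pi x/L_x)\cdots$) and apply the Poisson summation formula in each coordinate, which turns the sum over modes $\nvect\in\N^3$ into precisely the sum over the image lattice $I_\Omega$; this is the classical duality between separation of variables and the method of images, at the cost of a heavier computation.
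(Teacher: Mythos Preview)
Your proposal is correct. Note, however, that the paper does not actually prove this proposition: it simply cites Allen--Berkley \cite{Allen1976ImageMF} for a ``formal proof'' and a thesis chapter for details, so there is no in-paper argument to compare against line by line.

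That said, the comparison with the cited literature is worth making. Your primary route---show local finiteness of the image lattice so the series is a distribution with termwise differentiation, check that $\src$ is the unique image point in $\overline{\Omega}$, and then obtain $\partial_{\nvect}p=0$ on each face by pairing image sources under the corresponding reflection (a fixed-point-free involution of $I_\Omega$, as you verify)---is a clean direct verification. The classical Allen--Berkley argument that the paper defers to proceeds instead by expanding the Neumann Green's function in the cosine eigenbasis of the box and applying Poisson summation coordinate-wise to convert the modal sum into the image-source sum; this is precisely the ``alternative route'' you sketch at the end. So your main proof is genuinely different from (and arguably more transparent than) the one the paper points to, while your alternative coincides with it. The trade-off is as you describe: the symmetry/pairing argument is conceptually lighter but requires some care in interpreting the boundary trace distributionally, whereas the Poisson-summation route is heavier computationally but keeps everything in smooth test-function territory until the end.
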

We also refer the reader to \cite[Chapter~4]{sprunckThesis} for a more detailed proof of this result.

Each $p_{\rvect_{\qvect,\varepsilonvect}}$ is a Green kernel for the 3D wave equation in free field\footnote{Note that, $t$ being given, $p_{\rvect_{\qvect,\varepsilonvect}}(\cdot,t)$ is the distribution defined by
$$
\langle p_{\rvect_{\qvect,\varepsilonvect}}(\cdot,t),\varphi\rangle_{\mathcal{S}'(\R^3),\mathcal{S}(\R^3)}=\frac{1}{4\pi ct}\int_{S(\rvect_{\qvect, \varepsilonvect},ct)}\varphi(\rvect)\, d\rvect,
$$
where $S(\rvect_{\qvect, \varepsilonvect},ct)$ denotes the two-dimensional sphere in $\R^3$ centered at $\rvect_{\qvect, \varepsilonvect}$, with radius $ct$.
}. The sum can be interpreted as a superposition of sound waves emitted by a set of point sources located at the positions defined in \eqref{eq:IS_coord}. 
Each image source represents a specific path of specular reflections of the original source on the room's walls and is constructed by iteratively reflecting the source across the encountered walls, see Fig. \ref{fig:ism} for an illustration.

\begin{figure}
    \centering
    \includegraphics[width=0.7\linewidth]{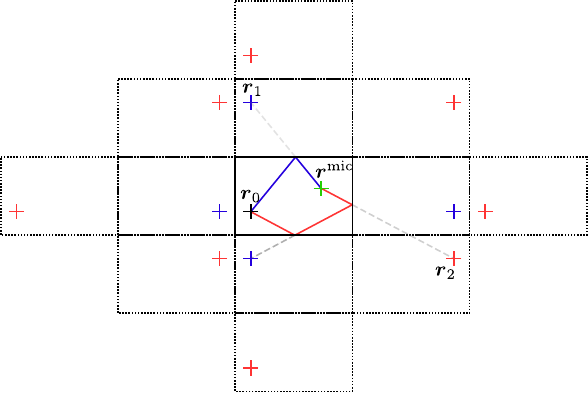}
    \caption{Representation of the source in black, the ﬁrst order image-sources in blue and second order sources in red in 2D. The reﬂection paths from the source $\src$ to a microphone $\mic$ and corresponding to
$\rvect_1$, $\rvect_2$ are drawn in blue and red.}
    \label{fig:ism}
\end{figure}

Notably, the geometric construction of image sources can be generalized to any polyhedral room configuration \cite{borish1984extension} by incorporating additional source visibility constraints. 
However, the image source technique provides the sound field solution to system \eqref{eq:wave_neu} only for a very limited number of room geometries, including cuboid rooms, as stated by Proposition \ref{prop:IS}.

\begin{remark}[An empirical approach to modeling more general absorption conditions]\label{rk:wallAbs}
The image source technique does not account for general admittance conditions such as \eqref{eq:bc_time}. In practice, a heuristic derived from the image source method is often used: a reflection coefficient is assigned to each wall based on its material properties, and amplitude coefficients $a_{\qvect, \varepsilonvect}$ are added to each impulse source term in equation \eqref{prop:IS} to model wall absorption. The amplitude of each source is determined by the product of the reflection coefficients of the walls encountered in the corresponding reflection path, with multiplicity. 
The resulting sound field also solves a free-field equation of the form 
\begin{equation}
    \left\{\label{eq:wave_equation_Free_neu}
    \begin{array}{ll}
     \displaystyle    \frac{1}{c^2}\partial_t^2 p(\rvect,t) - \Delta p(\rvect,t) = \sum_{\substack{\varepsilonvect \in \{-1\ts{,}1\}^3\\ \qvect \in \Z^3}} a_{\qvect, \varepsilonvect} \delta(t)\delta(\rvect-\rvect_{\qvect,\varepsilonvect})& (\rvect,t)\in\R^3 \times\R \\
        p(\rvect,t) = \partial_t p (\rvect,t) = 0 & (\rvect,t)\in\R^3\times \R_-^* ,
    \end{array}\right.  
\end{equation}
with added amplitudes $a_{\qvect, \varepsilonvect}$ for each source term. However, including these amplitudes breaks the direct connection to the original wave equation \eqref{eq:wave_neu}.
Further extensions of this model include considering frequency-dependent coefficients $\alpha_l$ and source directivity, see for instance \cite{srivastava2023realism, di2020echo}. We will only consider the case of constant coefficients in this article.
This
method falls into the category of geometric acoustic methods and provides an accurate approximation of
the pressure field at high frequencies when the wavelength is sufficiently smaller than the dimensions of the
room \cite{kuttruff2012room}. This property makes geometric acoustic methods a good substitute to expensive wave-based
simulators, such as finite elements, at high frequency.
\end{remark}

\subsection{A finite dimensional inverse problem}\label{sec:modelFiniteD}
We aim to recover the positions of the image sources based on the pressure field $p$ measured at a finite number of discrete receivers (microphones) in the room. 

\paragraph{The pressure field $p$ inside the room}
According to the discussion in Section~\ref{sec:directModel}, in an ideal configuration with perfectly reflecting walls\footnote{This is equivalent to imposing Neumann boundary conditions on the pressure field on the boundary of the room.}, the image source method summarized in Proposition~\ref{prop:IS} amounts to saying that the pressure field $p$ solves the following free-field equation:
\begin{equation}\label{eq:wave_freefield}
    \frac{1}{c^2}\frac{\partial^2p}{\partial t^2}(\rvect,t)-\Delta p(\rvect,t) = \psi(\rvect)\delta(t),\qquad (\rvect,t)\in \R^3\times \R_+ 
\end{equation}
where $\psi(\rvect)=\sum_{k=1}^{+\infty} a_k\delta (\rvect-\rvect_k)$, $\rvect_k$ being the locations of the image sources  ranging over the set $I_\Omega$ defined by \eqref{eq:IS_coord} and $a_k=1$ for every $k\in \N-\{0\}$. %
Therefore, the pressure field $p$ reads 
\begin{equation}\label{expr:p}
p(\rvect,t)=\sum_{k=1}^{+\infty}a_k\frac{\delta(\Ltn{\rvect_k-\rvect} -ct)}{4\pi \Ltn{\rvect_k-\rvect} }, \qquad \rvect \in \R^3.
\end{equation}

In accordance with Remark~~\ref{rk:wallAbs}, we will model wall absorption using coefficients $a_k$ that are no longer all equal to 1, but are positive and less than 1, following the model introduced in \cite{Allen1976ImageMF}. Consequently, from now on, the pressure field will still be given by expression~\eqref{expr:p}, but with unknown coefficients $a_k$ matching the reflection properties of the walls.

\paragraph{Observation of the pressure field at each microphone}Let $M\in \N-\{0\}$ be the number of used microphones and 
\begin{equation}\label{def:EM}
E_M=\{\mic_m,\;m\in\dint{1}{M}\}
\end{equation}
be the set of microphone positions.  In order to avoid the singularity of the Green kernel at each microphone location, we assume that for all $k\in \N-\{0\}$, $\rvect_k\notin  \{\mic_m\}_{m\in\dint{1}{M}}$. 
In our model, we need to account for three limitations:
\begin{itemize}
\item Microphones are unable to measure very high frequencies.
\item  Microphones cannot measure continuous signals.
\item The source amplitudes decrease geometrically with the order of reflection, meaning that if $k$ is large, $a_k$ can be considered negligible. Therefore, we will assume that:
$$
\exists K\in \N-\{0\}\quad \mid \quad \forall k\geq K+1, \quad a_k=0.
$$
\end{itemize}
Let us clarify the first and second limitations.
The measured pressure field at each receiver is obtained by convolving $p$ in time with a continuous filter $\kappa$ that models the microphone's response. In our case we consider a low-pass filter that models the limitation of measuring only low-frequency signals. The resulting signal is then discretized into $N$ time steps according to a fixed sampling frequency $f_s$, ranging from $0$ to $T_{\max}=(N-1)/f_s$. Thus, the microphone $m$ provides a sampled version of the signal in the form of a vector $(x_{m,n})_{0\leq n\leq N-1}$ given by 
\begin{equation}\label{eq:discrete_signal}
x_{m,n}=\left(\kappa \ast p(\mic_m,\cdot)\right) (n/f_s) = \sum_{k=1}^Ka_k\frac{\kappa(n/f_s-\Ltn{\rvect_k-\mic_m}/c)}{4\pi\Ltn{\rvect_k-\mic_m}}
\end{equation}
for every $ (m,n)\in\dint{1}{M}\times\dint{0}{N-1}$.

This leads us to define an observation function $\Gamma^K$ mapping a set of $K$ source amplitudes $\avect=(a_k)_{1\leq k\leq K}$ and positions $\rvect=(\rvect_k)_{1\leq k\leq K}$ to an ideal observation vector: 
\begin{equation}\label{eq:gammak}
   \forall (\avect,\rvect) \in (\R_+)^K\times ( \R^3-\{\mic_m\}_{m\in\dint{1}{M}})^K,\qquad
   \Gamma^K(\avect,\rvect) = \sum_{k=1}^K a_k\gamma(\rvect_k),
\end{equation}
where the function $\gamma :\R^3\setminus E_M\rightarrow \R^{MN}$ is defined component-wise by:
\begin{equation}
   \forall (m,n)\in\dint{1}{M}\times\dint{0}{N-1},\ \forall \rvect\in \R^3\setminus E_M,\quad
   \gamma_{m,n}(\rvect) = \frac{\kappa(n/f_s-\Ltn{\rvect-\mic_m}/c)}{4\pi\Ltn{\rvect-\mic_m}}.
\end{equation}
\begin{remark}
The model we consider includes a family of amplification coefficients 
\(\{a_k\}_{1\leq k\leq K}\) belonging to the interval \((0,1)\). Notably, if we have 
a family \(\{a_k\}_{1\leq k\leq K}\) in \(\mathbb{R}_+^*\), it is easy to transform 
it into a family of coefficients within the range \( (0,1) \) by considering the 
normalized family \(\{a_k/A\}_{1\leq k\leq K}\), where $A=\sum_{k=1}^{K}a_k$.
With these new coefficients, the solution \(p\) of the system \eqref{eq:wave_freefield} 
becomes \(p/A\), by linearity. For this reason, we will henceforth assume that 
the amplification coefficients \(\{a_k\}_{1\leq k\leq K}\) belong to \(\mathbb{R}_+^*\).
\end{remark}
Let 
$$
\mathscr{C}=\bigcap_{m=1}^M \overline{B\left(\mic_m, cT_{\max}\right)}\setminus E_M
$$ 
be the set of spike positions that are observable by every microphone in the time interval, \emph{i.e.} the set of sources for which every time of arrival at the microphones is inferior to the final time $T_{\max}$. 
Since the number of Dirac measures (or "spikes") to reconstruct is assumed to be lower than $K$, the reconstruction task can be framed as a least squares optimization problem:%
\begin{equation}
\boxed{\label{eq:nonconv}
    \tag{\mbox{$\mathscr{P}^{K}$}}
    \inf_{(\avect, \rvect)\in \Oset{}{K}}T(\avect,\rvect)\quad \text{with}\quad T(\avect,\rvect)=\frac{1}{2}\Ltn{\xvect-\sum_{k=1}^K a_k\gamma(\rvect_k)}^2 \quad \text{and}\quad \Oset{}{K} =\R_+^K\times\mathscr{C}^K}
\end{equation}
where $\xvect=(x_{mn})_{ (m,n)\in\dint{1}{M}\times\dint{0}{N-1}}$ is the target observation vector.
{
\begin{remark}
The function $T$, used as a criterion in problem \eqref{eq:nonconv}, is quadratic and convex with respect to the variable $\avect$. In contrast, the function $\gamma$ is assumed to be singular at every point of $E_M$. As a result, the function $T$ is smooth, though not convex, in the variable $\rvect$, except at the points of $E_M$. These singularities may lead to pathological behavior in minimizing sequences, particularly if an accumulation point of a minimizing sequence $(\rvect^l)_{l \in \mathbb{N}}$ coincides with a point in $E_M$. This situation is analyzed in Section~\ref{sec:analysis}.
\end{remark}
}
\medskip

In Section~\ref{sec:analysis}, we discuss the well-posedness of this problem. In particular, we will demonstrate that, without further constraints on the problem data, any outcome is possible (existence or non-existence). This will lead us to consider adding an additional constraint to the problem.

\section{\texorpdfstring{Analysis of Problem \eqref{eq:nonconv}}{Analysis of Problem}}\label{sec:analysis}
\subsection{Well-posedness issues}
In this section, we investigate the existence of solutions for problem \eqref{eq:nonconv}.  
We show that, without additional assumptions on the measurements obtained from the microphones, which may include noise, any scenario is possible. In particular, we present two situations: one where problem \eqref{eq:nonconv} has a solution, and another where it does not. The answers provided in this section are partial, as the conclusions are derived within frameworks that are not necessarily physical. Notably, two characteristics of the problem can lead to non-existence: the function $\gamma$ is singular at the points where the microphones are placed, and no regularization term has been added to the least squares function $T$. This will lead us to consider a slightly modified version of problem \eqref{eq:nonconv}.

\paragraph{Choice of the low-pass filter $\kappa$}
In practical applications, we will use an ideal low-pass filter given by:
    \begin{equation} \label{eq:lowpass_filter}
        \kappa^\text{lp} : t \mapsto \operatorname{sinc}(\pi f_s t),
    \end{equation}
     where $f_s$ is both the sampling frequency and the cutoff frequency of the filter. 
     This filter is designed to pass frequencies up to $f_s/2$, as the Fourier transform of $\kappa^\text{lp}$ is a rectangle function of width $\frac{1}{2}$.
Another commonly used filter is the Gaussian one, defined by 
    \begin{equation} \label{eq:lowpass_filterGauss}
\kappa^\sigma:t\mapsto e^{-\frac{t^2}{2\sigma^2}}. 
\end{equation}

\paragraph{An example of non-existence}

The existence of a solution to Problem~\eqref{eq:nonconv} is not guaranteed in general. Indeed, the spikes of a minimizing sequence for Problem~\eqref{eq:nonconv} may converge to microphone positions. In this paragraph, we detail the construction of counterexamples to the existence. 
Let $M$, $N$ be two integers larger than 2 and $\xvect=(x_{mn})_{ (m,n)\in\dint{1}{M}\times\dint{0}{N-1}}$ denote the synthetic observation vector defined by
    \begin{equation}
        \left\{ \begin{array}{ll}
            \forall m\in\dint{2}{M},\;\forall n \in\dint{0}{N-1}& 
            x_{m,n} = 0\\
            \forall n\in\dint{0}{N-1}&
             x_{1,n} = \alpha\kappa(n/f_s)
        \end{array}\right.
    \end{equation}
where $\alpha>0$. Then if $\kappa$ is continuous and $\kappa(0)>0$, the optimal value for Problem \eqref{eq:nonconv} is zero.  

Indeed, let $(\avect^l, \rvect^l)$ denote the sequence defined by 
\begin{equation}
    \left\{ \begin{array}{ll}
        \forall l\in \N^*,& 
        a_1^l = 4\pi\alpha/l, \quad \rvect_1^l=\mic_1+\uvect/l\\
        \forall l\in \N^*,\;k\in\dint{2}{K}& 
        a_k^l = 0, \quad \rvect_k^l=\mic_1+\uvect/l
    \end{array}\right.
\end{equation}
where $\uvect$ is an arbitrary unit vector. For $l$ large enough $\Gamma^K(\avect^l, \rvect^l)$ is well defined and $\Gamma^K(\avect^l,\rvect^l)$ converges to $\xvect$ as $l$ goes to infinity. 
To further simplify this example, assume that $K=1$, \emph{i.e.} we can place only one spike. Assume by contradiction that there exists a solution $(\avect, \rvect)=(a_1, \rvect_1)$ to problem \eqref{eq:nonconv} such that $a_1>0$ and $\rvect_1\neq \mic_1$. Since the optimal value is $0$, $a_1\gamma_{1,n}(\rvect_1) = \alpha \kappa(n/f_s)$ for all $n$. Let $t_1=\frac{\Ltn{\rvect_1-\mic_1}}{c}>0$ the source's time of arrival at $\mic_1$. We get:
\begin{equation}\label{eq:cex1}
    a_1\frac{\kappa(n/f_s-t_1)}{4\pi ct_1} = \alpha \kappa(n/f_s), \quad \forall n\in\dint{0}{N-1}.
\end{equation}
In particular, evaluating this expression at $n=0$ yields $\frac{a_1}{4\pi ct_1}= \frac{\alpha\kappa(0)}{\kappa(t_1)}$, and  \eqref{eq:cex1} can therefore be rewritten as:
\begin{equation}\label{eq:cex2}
    \forall n\in\dint{0}{N-1},\quad \kappa(n/f_s-t_1) = \kappa(n/f_s)\frac{\kappa(t_1)}{\kappa(0)}.
\end{equation}
Relation \eqref{eq:cex2} is not true in general for all values of $n$, depending on the choice of the filter $\kappa$.
For instance if $\kappa = \kappa^\text{lp}$ as defined in \eqref{eq:lowpass_filter}, then \eqref{eq:cex2} yields $\kappa(n/f_s-t_1)=0$ for every $n\in\dint{1}{N-1}$, which is false if $f_s t_1$ is not an integer. 

If $\kappa =\kappa^\sigma$ (Gaussian filter), we get by definition:
\begin{equation}
    \kappa^\sigma(n/f_s-t_1) = \kappa^\sigma(n/f_s)\kappa^\sigma(t_1)e^{\frac{nt_1}{f_s\sigma^2}}.
\end{equation}
As $\kappa^\sigma(0)=1$, \eqref{eq:cex2} leads to $e^{\frac{nt_1}{f_s\sigma^2}}=1$ and $t_1=0$, \emph{i.e.} $\rvect_1=\mic_1$. For both filter types we get a contradiction, thus problem \eqref{eq:nonconv} does not admit a solution in that case.
\paragraph{Existence may arise}

Finally, we provide an existence criterion for Problem~\eqref{eq:nonconv} under the assumption that the operator $\Gamma^K$ is lower bounded in some sense: 
\begin{definition}\label{def:alb}
    $\Gamma^K$ is said to be \textbf{amplitude lower-bounded} if there exists a constant $C>0$ such that:
    \begin{equation}
        \forall (\avect, \rvect) \in \Oset{}{K}, \quad \Ltn{\Gamma^K(\avect, \rvect)}\geq C\sum_{k=1}^Ka_k.
    \end{equation}
\end{definition}

In what follows, we will make the following general assumption on the kernel $\kappa$:
\begin{equation}
\tag{\ensuremath{H_\kappa}}
\label{hyp:filter}
    \begin{array}{l}
        (i) \quad\text{The function }\kappa \text{ is continuous on }\R, \text{ such that }\kappa(0)>0 \\
        (ii) \quad\lim_{| t|\to +\infty}\kappa(t)=0. 
    \end{array}
\end{equation}

Let us now state the main existence result.

\begin{theorem}\label{th:existence1}
    Let us assume that $\kappa$ satisfies \eqref{hyp:filter} and that $\Gamma^K$ is amplitude lower-bounded by a constant $C>0$. We define the constant :
    \begin{equation}   \label{eq:condth1}
    \phi \coloneqq\inf_{t\in\R_+^*}\sum_{n=0}^{N-1}\frac{\kappa(n/f_s)\kappa(n/f_s-t)}{4\pi ct}
    \end{equation}
    and the coefficients
    \begin{equation}
    \mu_m \coloneqq \sum_{n=0}^{N-1}x_{m,n}\kappa(n/f_s),\quad m\in\dint{1}{M}.
    \end{equation}
    Then Problem~\eqref{eq:nonconv} has a solution whenever one of the following conditions is satisfied:
    \begin{enumerate}[(i)]
        \item $\phi<0$ and for all $m\in\dint{1}{M}$, $\mu_m\leq \frac{2}{C}\phi\Ltn{\xvect}$
        \item $\phi\geq 0$ and for all $m\in\dint{1}{M}$, $\mu_m\leq 0$.
    \end{enumerate}
\end{theorem}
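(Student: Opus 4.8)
The plan is to use the direct method of the calculus of variations, where the key point is to show that a minimizing sequence cannot have spikes escaping to infinity, vanishing in amplitude, \emph{or} concentrating at a microphone position — the last being the real danger in view of the singularities of $\gamma$. First I would fix a minimizing sequence $(\avect^l,\rvect^l)_{l\in\N}$ in $\Oset{}{K}$ for $T$, and note that since $(\avect^l,\rvect^l)=(0,\rvect)$ is admissible, the optimal value is at most $\frac12\Ltn{\xvect}^2$. Hence, along the minimizing sequence, $\Ltn{\sum_k a_k^l\gamma(\rvect_k^l)}$ stays bounded, say by some $R>0$ (e.g.\ $R=2\Ltn{\xvect}$ for $l$ large), and by the amplitude lower-bound hypothesis $C\sum_k a_k^l\leq \Ltn{\Gamma^K(\avect^l,\rvect^l)}\leq R$, so the amplitudes $(\avect^l)$ are bounded. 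Up to extraction, $\avect^l\to\avect^\star\in\R_+^K$.

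The heart of the argument is to control the positions. The set $\mathscr{C}$ is bounded (intersection of closed balls, minus $E_M$), so up to a further extraction each $\rvect_k^l$ converges to some $\rvect_k^\star\in\overline{\mathscr{C}}$, i.e.\ in $\bigcap_m \overline{B(\mic_m,cT_{\max})}$ but possibly $\rvect_k^\star\in E_M$. Split the index set $\dint1K$ into $\mathcal G=\{k : \rvect_k^\star\notin E_M\}$ and $\mathcal B=\{k:\rvect_k^\star\in E_M\}$. On $\mathcal G$, $\gamma$ is continuous at $\rvect_k^\star$, so $a_k^l\gamma(\rvect_k^l)\to a_k^\star\gamma(\rvect_k^\star)$. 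The claim to establish is that for $k\in\mathcal B$ one necessarily has $a_k^\star=0$; granting this, one checks that the "bad" terms contribute vanishingly to the objective. To see that, for $k\in\mathcal B$ with $\rvect_k^\star=\mic_{m_0}$, compute the inner product of the residual-like quantity $\langle \Gamma^K(\avect^l,\rvect^l),\evect_{m_0}\rangle$ against the test vector $(\kappa(n/f_s))_{n}$ restricted to channel $m_0$: the term coming from spike $k$ behaves, after evaluating, like $a_k^l\sum_n \kappa(n/f_s)\kappa(n/f_s-t_k^l)/(4\pi c t_k^l)$ with $t_k^l=\Ltn{\rvect_k^l-\mic_{m_0}}/c\to 0^+$, which is bounded below by $a_k^l\,\phi$ when $\phi\geq0$ (or above by $a_k^l\,\phi$ when $\phi<0$) by definition of $\phi$; the remaining terms ($k'\neq k$, and all channels $m\neq m_0$) stay bounded because $\kappa$ is bounded (by $H_\kappa$) and the amplitudes are bounded. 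Comparing with $\mu_{m_0}=\sum_n x_{m_0,n}\kappa(n/f_s)$, the two sign hypotheses $(i)$ $\phi<0,\ \mu_m\leq \frac2C\phi\Ltn\xvect$ and $(ii)$ $\phi\geq0,\ \mu_m\leq0$ are precisely what is needed to force $a_k^l\to0$: otherwise the value of $T(\avect^l,\rvect^l)$ would be bounded away from the infimum, or the residual in channel $m_0$ paired with $(\kappa(n/f_s))_n$ would be bounded away from what the infimum allows — a contradiction with $(\avect^l,\rvect^l)$ being minimizing and with $T(\avect^l,\rvect^l)\leq \frac12\Ltn\xvect^2$.

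Once $a_k^\star=0$ for $k\in\mathcal B$ is in hand, I would define $\rvect^\star$ by keeping $\rvect_k^\star$ for $k\in\mathcal G$ and replacing each $\rvect_k^\star$, $k\in\mathcal B$, by an arbitrary fixed point of $\mathscr{C}$ (this is harmless since $a_k^\star=0$), so that $(\avect^\star,\rvect^\star)\in\Oset{}{K}$. By the continuity just discussed and the fact that the $\mathcal B$-terms vanish in the limit, $\Gamma^K(\avect^l,\rvect^l)\to\Gamma^K(\avect^\star,\rvect^\star)$ in $\R^{MN}$, hence $T(\avect^l,\rvect^l)\to T(\avect^\star,\rvect^\star)$, which therefore equals the infimum: $(\avect^\star,\rvect^\star)$ is a minimizer.

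I expect the main obstacle to be the bookkeeping in the concentration step: making rigorous that "one spike collapsing onto $\mic_{m_0}$ while the others stay put" is, up to extraction and relabeling, the only scenario to rule out (several spikes may collapse onto the same or different microphones simultaneously, and $t_k^l$ may go to zero at different rates), and choosing the right linear functional of the data to expose the sign obstruction. The quantities $\phi$ and $\mu_m$ are clearly engineered so that pairing the observation with $(\kappa(n/f_s))_n$ on the relevant channel isolates the divergent contribution; the care needed is to ensure the bound on the complementary terms is uniform in $l$, which follows from boundedness of $\kappa$ (hypothesis $H_\kappa(i)$–(ii) give continuity and decay, hence boundedness on $\R$) and of the amplitudes. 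A secondary subtlety is that $\phi$ is defined as an infimum over $t\in\R_+^*$ and need not be attained, so the estimate must be phrased as an inequality valid for every small $t$, which is exactly how $\phi$ enters.
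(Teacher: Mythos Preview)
There is a genuine gap at the concentration step. Your plan asserts that once you know $a_k^\star=0$ for $k\in\mathcal B$, ``the bad terms contribute vanishingly to the objective''. That is exactly the point that fails: when $\rvect_k^l\to\mic_{m_0}$, the kernel $\gamma_{m_0,n}$ blows up like $1/\Ltn{\rvect_k^l-\mic_{m_0}}$, so the product $a_k^l\gamma_{m_0,n}(\rvect_k^l)$ need not vanish even though $a_k^l\to0$. In fact $a_k^l\to0$ is automatic (it follows from amplitude lower-boundedness and boundedness of $T$ along the sequence, without using $\phi$ or $\mu_m$), but what survives in the limit is a term of the form $\widetilde a_{m_0}\,\kappa(n/f_s)$ on channel $m_0$, where $\widetilde a_{m_0}=\lim a_k^l/(4\pi\Ltn{\rvect_k^l-\mic_{m_0}})\geq0$. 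The non\-existence example in the paper (a single spike collapsing onto $\mic_1$ with amplitude $4\pi\alpha/l$) shows that $\widetilde a_{m_0}>0$ can genuinely occur, so your implication ``$a_k^\star=0\Rightarrow$ contribution vanishes'' is false in general. Relatedly, your inequality ``the spike-$k$ term is $\geq a_k^l\phi$'' is correct but vacuous for this purpose, since $a_k^l\phi\to0$ regardless.

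What the hypotheses on $\phi$ and $\mu_m$ actually buy is that these residual coefficients $\widetilde a_m$ must be zero. The paper makes this precise by passing to an extended limit functional
\[
\widetilde T(\avect,\rvect,\widetilde\avect)=\frac12\sum_{m,n}\Bigl(x_{m,n}-\sum_{k=1}^{K'}a_k\gamma_{m,n}(\rvect_k)-\widetilde a_m\kappa(n/f_s)\Bigr)^2,
\]
showing that the infimum of $T$ equals $\widetilde T(\avect,\rvect,\widetilde\avect)$ for some $(\avect,\rvect,\widetilde\avect)$ with $\widetilde\avect\in\R_+^M$, and then minimizing the convex quadratic $\widetilde a_m\mapsto\widetilde T$ separately in each $\widetilde a_m$: the unconstrained minimizer is $\leq0$ precisely when $\mu_m\leq\sum_{k,n}a_k\kappa(n/f_s)\gamma_{m,n}(\rvect_k)$, and the conditions $(i)$/$(ii)$ provide a uniform lower bound ($\frac{2}{C}\phi\Ltn\xvect$ or $0$) for that right-hand side. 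Your pairing idea against $(\kappa(n/f_s))_n$ is the right linear functional, but it should be used to force $\widetilde a_m=0$, not $a_k^\star=0$.
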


The proof of Theorem \ref{th:existence1} is provided in Section~\ref{proof:th:existence1}. 
We first conclude this section by specifying sufficient conditions on the filter $\kappa$ that ensure the assumption ``$\Gamma^K$ is amplitude lower-bounded'' is satisfied.

\paragraph{On the assumption ``$\Gamma^K$ is amplitude lower-bounded''}
The following criterion provides a sufficient condition on the filter $\kappa$ to ensure amplitude lower-boundedness of $\Gamma^K$ with respect to the number of time samples.

\begin{proposition}\label{prop:alb_discrete}
    Let $f_s\in \R_+^*$ , $N\in\N^*$. Let $\kappa$ satisfy \eqref{hyp:filter} and: 
    \begin{equation} \label{eq:alb_discrete}
        \forall \tau \in \left[0, \frac{N-1}{f_s}\right], \qquad \sum_{n=0}^{N-1} \kappa(n/f_s-\tau) > 0
    \end{equation}
then $\Gamma^K$ is amplitude lower-bounded. 
\end{proposition}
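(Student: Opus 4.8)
The plan is to bound $\Ltn{\Gamma^K(\avect,\rvect)}$ below by the $\ell^1$-norm of the amplitude vector uniformly over admissible positions, using the strict positivity assumption \eqref{eq:alb_discrete} together with a compactness argument. First I would observe that it suffices to produce, for each single source, a vector $\wvect \in \R^{MN}$ of unit norm such that $\langle \gamma(\rvect), \wvect \rangle \geq C$ uniformly in $\rvect \in \mathscr{C}$, but actually the cleaner route is to work microphone-by-microphone: fix $m$ and consider the linear functional $L_m(\yvect) = \sum_{n=0}^{N-1} \yvect_{m,n} \kappa(n/f_s - \tau_m)$ — no, more simply, project onto the $m$-th block and test against the all-ones vector on that block. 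Concretely, for a single source at position $\rvect$ with $t_m(\rvect) = \Ltn{\rvect - \mic_m}/c$, the $m$-th block of $\gamma(\rvect)$ is $\left(\frac{\kappa(n/f_s - t_m(\rvect))}{4\pi c t_m(\rvect)}\right)_{n}$, and summing over $n$ gives $\frac{1}{4\pi c t_m(\rvect)}\sum_{n=0}^{N-1}\kappa(n/f_s - t_m(\rvect))$, which is positive by \eqref{eq:alb_discrete} whenever $t_m(\rvect) \in [0, (N-1)/f_s]$ — and this holds precisely because $\rvect \in \mathscr{C} \subset \overline{B(\mic_m, cT_{\max})}$, so $t_m(\rvect) \le T_{\max} = (N-1)/f_s$.

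Next I would turn this pointwise positivity into a uniform lower bound. Define $g_m(\rvect) = \sum_{n=0}^{N-1}\kappa(n/f_s - t_m(\rvect))$ on $\mathscr{C}$. By assumption \eqref{eq:alb_discrete} and the fact that $t_m$ maps $\mathscr{C}$ into $[0,(N-1)/f_s]$, we have $g_m > 0$ on $\mathscr{C}$. The set $\mathscr{C}$ is not compact because the microphone positions are excised, but $t_m(\rvect) \to 0$ as $\rvect \to \mic_m$, and by hypothesis \eqref{hyp:filter}(i) the function $\tau \mapsto \sum_n \kappa(n/f_s - \tau)$ is continuous with value $\sum_n \kappa(n/f_s)$ at $\tau = 0$, which — taking $\tau=0$ in \eqref{eq:alb_discrete} — is strictly positive. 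Hence $g_m$ extends continuously to the \emph{closed} compact set $\bigcap_m \overline{B(\mic_m, cT_{\max})}$ (including the excised points), where it is a continuous strictly positive function on a compact set, so $\inf g_m =: \eta_m > 0$. Likewise $t_m(\rvect) \le T_{\max}$ everywhere, so the scalar factor $\frac{1}{4\pi c\, t_m(\rvect)} \ge \frac{1}{4\pi c T_{\max}}$; combining, the $n$-sum of the $m$-th block of $\gamma(\rvect)$ is $\ge \frac{\eta_m}{4\pi c T_{\max}} =: c_m > 0$.

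Finally I would assemble the estimate. For $(\avect,\rvect) \in \Oset{}{K}$, each $a_k > 0$, so testing the $m$-th block of $\Gamma^K(\avect,\rvect) = \sum_k a_k \gamma(\rvect_k)$ against the (normalized) all-ones vector on that block and using $\Ltn{\cdot} \ge$ the absolute value of any coordinate projection appropriately normalized, one gets
\begin{equation*}
\sqrt{N}\,\Ltn{\Gamma^K(\avect,\rvect)} \;\ge\; \sum_{n=0}^{N-1}\big(\Gamma^K(\avect,\rvect)\big)_{m,n} \;=\; \sum_{k=1}^K a_k \sum_{n=0}^{N-1}\gamma_{m,n}(\rvect_k) \;\ge\; c_m \sum_{k=1}^K a_k,
\end{equation*}
whence $\Ltn{\Gamma^K(\avect,\rvect)} \ge \frac{c_m}{\sqrt N}\sum_k a_k$, and picking, say, $m=1$ (or the best $m$) gives the constant $C = c_1/\sqrt N > 0$ in Definition \ref{def:alb}. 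The main obstacle — and the step that needs care — is the non-compactness of $\mathscr{C}$ at the excised microphone points: one must check that the quantity being minimized (the block-sum $g_m$, \emph{not} $\gamma$ itself, which blows up) stays bounded away from zero there, which is exactly what the $\tau=0$ case of \eqref{eq:alb_discrete} together with continuity of $\kappa$ delivers. A secondary point to verify is that $t_m(\rvect) \le (N-1)/f_s$ for all $\rvect \in \mathscr{C}$ so that hypothesis \eqref{eq:alb_discrete} is applicable on the whole relevant range of $\tau$; this is immediate from the definition of $\mathscr{C}$ and $T_{\max}$.
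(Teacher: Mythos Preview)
Your proof is correct, and takes a genuinely different---and more elementary---route than the paper's. The paper argues by contradiction via a minimizing sequence: it restricts amplitudes to the simplex $H=\{\alphavect\in\R_+^K:\sum_k\alpha_k=1\}$, considers a minimizing sequence for $J(\alphavect,\rvect)=\Lon{\Gamma^K(\alphavect,\rvect)}$ on $H\times\mathscr{C}^K$, extracts a convergent subsequence, and carefully analyzes the situation where some spike locations converge to microphone positions (showing the corresponding amplitudes must vanish). Only after passing to the limit does it invoke \eqref{eq:alb_discrete} to conclude that $\inf_\Lambda J>0$. By contrast, you bypass the minimizing-sequence machinery entirely: you fix a single microphone $m$, test $\Gamma^K(\avect,\rvect)$ against the all-ones vector on that block, and observe that the resulting sum $\sum_n\gamma_{m,n}(\rvect)=g_m(\rvect)/(4\pi c\,t_m(\rvect))$ is bounded below uniformly because (i) $g_m$ extends continuously and positively to the \emph{compact} closure (the excised microphone points causing no trouble for $g_m$, only for $\gamma$ itself) and (ii) $t_m\le T_{\max}$. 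This yields an explicit constant $C=\eta_m/(4\pi c\,T_{\max}\sqrt N)$ and sidesteps the delicate limit analysis. Your approach is shorter and constructive; the paper's sequence argument, while heavier here, mirrors the structure used later in Lemmas~\ref{prop:behavior}--\ref{prop:limit_cost} and so fits more uniformly into the surrounding analysis.
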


In particular we can apply this result to $\kappa^\text{lp}$ defined in \eqref{eq:lowpass_filter}. 

\begin{corollary}\label{cor:prop:alb_discrete}
    Let $\kappa=\kappa^\text{lp}$, then $\Gamma^K$ is amplitude lower-bounded.
\end{corollary}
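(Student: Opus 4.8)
\textbf{Proof plan for Corollary~\ref{cor:prop:alb_discrete}.}

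The plan is to apply Proposition~\ref{prop:alb_discrete}, so the only thing to check is that $\kappa^\text{lp}$ satisfies hypothesis~\eqref{hyp:filter} and the positivity condition~\eqref{eq:alb_discrete}. The first part is immediate: $\kappa^\text{lp}(t)=\operatorname{sinc}(\pi f_s t)$ is continuous on $\R$ with $\kappa^\text{lp}(0)=1>0$, and $\operatorname{sinc}(\pi f_s t)\to 0$ as $|t|\to+\infty$, so \eqref{hyp:filter} holds. The substance of the argument is therefore to prove that
\begin{equation}
\forall \tau\in\left[0,\tfrac{N-1}{f_s}\right],\qquad \sum_{n=0}^{N-1}\operatorname{sinc}\!\left(\pi f_s\!\left(\tfrac{n}{f_s}-\tau\right)\right)=\sum_{n=0}^{N-1}\operatorname{sinc}\bigl(\pi(n-f_s\tau)\bigr)>0.
\end{equation}

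The key step is to recognize this sum as a known Dirichlet-type kernel. Writing $u=f_s\tau\in[0,N-1]$, I would show that
\begin{equation}
S(u):=\sum_{n=0}^{N-1}\operatorname{sinc}\bigl(\pi(n-u)\bigr)=\sum_{n=0}^{N-1}\frac{\sin\bigl(\pi(n-u)\bigr)}{\pi(n-u)}.
\end{equation}
Using $\sin(\pi(n-u))=(-1)^n\sin(-\pi u)=-(-1)^n\sin(\pi u)$ for integer $n$, one gets the closed form $S(u)=\dfrac{-\sin(\pi u)}{\pi}\sum_{n=0}^{N-1}\dfrac{(-1)^n}{n-u}$ whenever $u\notin\Z$; and $S(m)=1$ for integer $m\in\dint{0}{N-1}$ since all terms vanish except $n=m$. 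The cleanest route to positivity is to interpret $S(u)$ as the value at $u$ of the (unique) bandlimited interpolation of the constant sequence $1,1,\dots,1$ on the nodes $0,\dots,N-1$, or more concretely to pair up terms: group $n$ with $N-1-n$, write $\sin(\pi u)\cdot\left[\tfrac{(-1)^n}{u-n}+\tfrac{(-1)^{N-1-n}}{u-(N-1-n)}\right]$, and check each paired contribution has the sign of... — this is where one must be careful, since the signs of individual terms alternate. I expect the honest argument to go through the partial-fraction identity
\begin{equation}
\sum_{n=0}^{N-1}\frac{(-1)^n}{u-n}=\frac{1}{2}\left[\psi\!\left(\tfrac{u+N+1}{2}\right)-\psi\!\left(\tfrac{u+1}{2}\right)+\psi\!\left(\tfrac{u}{2}\right)-\psi\!\left(\tfrac{u-N+1}{2}\right)\right]\cdot(\text{const}),
\end{equation}
or, more elementarily, to bound $S(u)$ below by splitting off the two nearest nodes $\lfloor u\rfloor$ and $\lceil u\rceil$ (whose contributions dominate and are manifestly handled) and showing the remaining alternating tail sums on each side are small in absolute value by monotonicity of $1/|u-n|$.

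The main obstacle is precisely this last positivity estimate: because $\operatorname{sinc}(\pi(n-u))$ takes negative values, the sum is not termwise positive, and one needs a genuine cancellation/monotonicity argument (an alternating-series bound on the tails, or a convexity/interpolation argument) to conclude $S(u)>0$ on the whole interval $[0,N-1]$ rather than just near the nodes. Once that estimate is in hand, Corollary~\ref{cor:prop:alb_discrete} follows immediately from Proposition~\ref{prop:alb_discrete}. I would also note the endpoint and integer cases ($u\in\{0,1,\dots,N-1\}$, giving $S(u)=1$) separately since the closed-form expression is singular there, though removably so.
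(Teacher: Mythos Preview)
Your plan is correct and, among the options you list, the ``more elementary'' one---split the sum at the two nearest nodes $\lfloor u\rfloor,\lceil u\rceil$ and control each side by an alternating-series/monotonicity bound---is precisely the route the paper takes. Concretely, the paper writes $y_n=\operatorname{sinc}(\pi(n-u))$, splits $\sum_{n=0}^{N-1}y_n$ into the left block $\sum_{n\le n^-}y_n$ and the right block $\sum_{n\ge n^+}y_n$ (with $n^-=\lfloor u\rfloor$, $n^+=\lceil u\rceil$), uses $y_{n^++l}=(-1)^l\frac{\sin(\pi(n^+-u))}{\pi(n^++l-u)}$ and the alternating-series estimate $\bigl|\sum_{n\ge n^++1}y_n\bigr|\le|y_{n^++1}|<y_{n^+}$ to conclude each block is strictly positive; the integer case $u\in\{0,\dots,N-1\}$ gives $S(u)=1$ as you note. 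So your final option is not merely one candidate among several---it \emph{is} the argument, and it closes immediately.

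The other routes you sketch are detours you should drop: the pairing $n\leftrightarrow N-1-n$ does not produce sign-definite pairs in general (as you yourself suspect), the digamma/partial-fraction identity is heavy machinery for a two-line estimate, and the bandlimited-interpolation interpretation, while conceptually pleasant, does not by itself yield $S(u)>0$ on $(0,N-1)$ without further work. Commit to the alternating-series argument; it is short, self-contained, and exactly what is needed.
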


In practice criterion \eqref{eq:alb_discrete} can be relaxed to a continuous counterpart  which ensures $\Gamma^K$ is asymptotically amplitude lower-bounded as the number of time samples goes to infinity. The following result encompasses the case of the Gaussian filter $\kappa^\sigma$ given by \eqref{eq:lowpass_filterGauss}.

\begin{corollary} \label{prop:alb_crit}
    Let $T_{\max}\in\R_+^*$ and assume that the filter $\kappa$ verifies: 
    \begin{equation} \label{eq:alb_continuous}
        \forall \tau \in [0, T_{\max}], \quad \int_{0}^{T_{\max}}\!\!\!\!\!\!\!\kappa(t-\tau)dt > 0.
    \end{equation}
Then there exists $N'\in\N^*$ such that $\Gamma^K$ is amplitude lower-bounded for all $f_s$, $N$ that verify $N\geq N'$ and $T_{\max}=(N-1)/f_s$.
\end{corollary}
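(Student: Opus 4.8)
\textbf{Proof plan for Corollary~\ref{prop:alb_crit}.}

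The plan is to reduce the continuous hypothesis \eqref{eq:alb_continuous} to the discrete hypothesis \eqref{eq:alb_discrete} of Proposition~\ref{prop:alb_discrete} by recognizing the sum $\frac{1}{f_s}\sum_{n=0}^{N-1}\kappa(n/f_s-\tau)$ as a Riemann sum for $\int_0^{T_{\max}}\kappa(t-\tau)\,dt$, and then invoking Proposition~\ref{prop:alb_discrete}. First I would fix the quantity of interest: for $\tau\in[0,T_{\max}]$ set $S_N(\tau) := \frac{1}{f_s}\sum_{n=0}^{N-1}\kappa(n/f_s-\tau)$ and $I(\tau) := \int_0^{T_{\max}}\kappa(t-\tau)\,dt$, where throughout we impose the relation $T_{\max}=(N-1)/f_s$. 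Since $\kappa$ is continuous on $\R$ (by \eqref{hyp:filter}(i)) and $\tau$ ranges over the compact interval $[0,T_{\max}]$, the function $(t,\tau)\mapsto \kappa(t-\tau)$ is uniformly continuous on $[-T_{\max},T_{\max}]\times[0,T_{\max}]$; the standard Riemann-sum error estimate then gives a bound of the form $|S_N(\tau)-I(\tau)| \leq T_{\max}\,\omega_\kappa(1/f_s)$ for every $\tau\in[0,T_{\max}]$, where $\omega_\kappa$ is the modulus of continuity of $\kappa$ on a suitable compact set — a bound that is \emph{uniform} in $\tau$ and tends to $0$ as $f_s\to+\infty$, i.e. as $N\to+\infty$ under the constraint $T_{\max}=(N-1)/f_s$.

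Next I would extract a uniform positive lower bound on $I$. By \eqref{eq:alb_continuous}, $I(\tau)>0$ for each $\tau\in[0,T_{\max}]$; since $\tau\mapsto I(\tau)$ is continuous (again by continuity of $\kappa$ and dominated convergence, $\kappa$ being bounded on compacts) on the compact set $[0,T_{\max}]$, it attains its minimum, so there is $\eta>0$ with $I(\tau)\geq\eta$ for all $\tau\in[0,T_{\max}]$. Combining with the Riemann-sum estimate: choose $N'\in\N^*$ large enough that $T_{\max}\,\omega_\kappa(1/f_s) < \eta$ whenever $f_s$ satisfies $f_s=(N-1)/T_{\max}$ with $N\geq N'$ — this is possible because $1/f_s = T_{\max}/(N-1)\to 0$ as $N\to\infty$ and $\omega_\kappa$ is itself a modulus of continuity (hence $\to 0$ at $0$), with the compact set on which the modulus is computed being fixed ($[-T_{\max},T_{\max}]$, independent of $N$). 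Then for all such $N$ and $\tau\in[0,T_{\max}]$ we get $S_N(\tau) \geq I(\tau) - |S_N(\tau)-I(\tau)| > \eta - \eta = 0$, hence $\sum_{n=0}^{N-1}\kappa(n/f_s-\tau) = f_s\,S_N(\tau) > 0$, which is exactly hypothesis \eqref{eq:alb_discrete}. Proposition~\ref{prop:alb_discrete} then yields that $\Gamma^K$ is amplitude lower-bounded, for every pair $(f_s,N)$ with $N\geq N'$ and $T_{\max}=(N-1)/f_s$.

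The main subtlety — and the step I would be most careful about — is ensuring all estimates are \emph{uniform in $\tau$ simultaneously with the limit in $N$}, and that the compact set carrying the modulus of continuity does not itself depend on $N$. This is what makes the constraint $T_{\max}=(N-1)/f_s$ essential: it pins the time window to a fixed interval $[0,T_{\max}]$, so that as $N$ grows the sampling step $1/f_s$ shrinks while the domain stays put, and the argument $n/f_s-\tau$ of $\kappa$ remains confined to the fixed compact $[-T_{\max},T_{\max}]$. Were the window allowed to grow with $N$, the modulus of continuity would have to be controlled on an unbounded set and the decay to $0$ of the hypothesis $\lim_{|t|\to\infty}\kappa(t)=0$ from \eqref{hyp:filter}(ii) would need to be brought in more delicately; here, however, \eqref{hyp:filter}(ii) is not even needed for this corollary, only continuity of $\kappa$ and the sign condition \eqref{eq:alb_continuous}. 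One should also check the endpoint bookkeeping of the Riemann sum (the sum has $N$ terms $n=0,\dots,N-1$ over an interval of length $(N-1)/f_s$, so $1/f_s$ is indeed the correct mesh), but this is routine.
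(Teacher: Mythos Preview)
Your proposal is correct and follows essentially the same approach as the paper: recognize the discrete sum as a Riemann sum, use uniform convergence in $\tau$ (as $N\to\infty$ with $T_{\max}$ fixed) together with the positive lower bound on $I(\tau)$ coming from compactness, and then invoke Proposition~\ref{prop:alb_discrete}. Your version is in fact more careful than the paper's terse proof, which omits the $1/f_s$ normalization and the explicit uniform-continuity/compactness justification you spell out.
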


\subsection{Proof of Theorem~\ref{th:existence1}}\label{proof:th:existence1}

We first study the behavior of the spikes of a minimizing sequence for Problem~\eqref{eq:nonconv} (Lemma \ref{prop:behavior}) where $\kappa$ satisfies \eqref{hyp:filter}, and provide an expression of the optimal value (Lemma \ref{prop:limit_cost}). From this expression we deduce a simple existence criterion (Lemma \ref{prop:lemma_crit}) which we then apply to prove Theorem \ref{th:existence1}.


The following lemma explores the asymptotical behavior of the amplitudes and locations of a minimizing sequence $(\avect^l,\rvect^l)$.

\begin{lemma}\label{prop:behavior}
    Consider a minimizing sequence $(\avect^l,\rvect^l)$ for Problem~\eqref{eq:nonconv}. Then, up to a subsequence, the sequence of spike positions $(\rvect^l_k)$ satisfies one of the following properties:
    \begin{enumerate}[(i)]
        \item
        $\exists \rvect_k\in \mathscr C,\;
        \exists a_k\in\R_+, \quad \rvect^l_k\xrightarrow[l\rightarrow+\infty]{}\rvect_k$
         and $a_k^l \xrightarrow[l\rightarrow +\infty]{}a_k$
        \item $\exists m_k\in\dint{1}{M},\;\exists \widetilde{a}_k\in\R_+\quad \rvect^l_k\xrightarrow[l\rightarrow+\infty]{}\mic_{m_k}$,
        $a^l_k\xrightarrow[l\rightarrow+\infty]{}0$ and $\frac{a_k^l}{4\pi\Ltn{\rvect_k^l-\mic_{m_k}}} \xrightarrow[l\rightarrow +\infty]{}\widetilde{a}_k$.
    \end{enumerate}
\end{lemma}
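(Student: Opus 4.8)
\textbf{Proof plan for Lemma~\ref{prop:behavior}.}
The plan is to exploit that a minimizing sequence $(\avect^l,\rvect^l)$ has bounded cost $T(\avect^l,\rvect^l)$, and to leverage the amplitude-lower-boundedness-type control together with the compactness of the closed bounded set $\mathscr{C}$ minus its problematic points. First I would observe that each component sequence $(\rvect_k^l)_l$ lives in $\mathscr{C}\subset\bigcap_m\overline{B(\mic_m,cT_{\max})}$, which is bounded; hence, up to a first subsequence extraction (done finitely many times, once per $k\in\dint{1}{K}$), we may assume $\rvect_k^l\to\rvect_k^\infty$ for some $\rvect_k^\infty\in\overline{\mathscr{C}}$. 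The dichotomy is then exactly: either $\rvect_k^\infty\in\mathscr{C}$ (case (i)), or $\rvect_k^\infty\in E_M$, i.e. $\rvect_k^\infty=\mic_{m_k}$ for some $m_k$ (case (ii)), since $\overline{\mathscr{C}}\setminus\mathscr{C}\subseteq E_M$ (the set $E_M$ is finite and removing it from a closed set only adds back those finitely many points in the closure).

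Next I would control the amplitudes. Since $(\avect^l,\rvect^l)$ is minimizing, $\Ltn{\xvect-\sum_k a_k^l\gamma(\rvect_k^l)}$ is bounded, hence $\Ltn{\sum_k a_k^l\gamma(\rvect_k^l)}$ is bounded, hence (using that $\Gamma^K$ is amplitude lower-bounded — this is the hypothesis invoked, so I may assume it, or more precisely I should check whether the lemma needs it; if the statement is meant to hold generally I would instead argue componentwise below) $\sum_k a_k^l$ is bounded, so each $a_k^l$ is bounded. Up to further subsequences, $a_k^l\to a_k\in\R_+$. This immediately settles case (i): on that subsequence, $\gamma(\rvect_k^l)\to\gamma(\rvect_k^\infty)$ by continuity of $\gamma$ away from $E_M$, and we take $\rvect_k=\rvect_k^\infty$, $a_k=\lim a_k^l$. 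For case (ii), I must still show $a_k^l\to0$ and that the rescaled quantity $a_k^l/(4\pi\Ltn{\rvect_k^l-\mic_{m_k}})$ converges. The key point is that $\gamma_{m_k,n}(\rvect_k^l)$ behaves like $\kappa(n/f_s-\Ltn{\rvect_k^l-\mic_{m_k}}/c)/(4\pi\Ltn{\rvect_k^l-\mic_{m_k}})$, and as $\rvect_k^l\to\mic_{m_k}$ the argument $n/f_s-\Ltn{\rvect_k^l-\mic_{m_k}}/c\to n/f_s$, so by continuity of $\kappa$, $\gamma_{m_k,n}(\rvect_k^l)\sim \kappa(n/f_s)/(4\pi\Ltn{\rvect_k^l-\mic_{m_k}})$, which blows up like $1/\Ltn{\rvect_k^l-\mic_{m_k}}$ unless $a_k^l$ decays at the same rate; since the total sum $\sum_k a_k^l\gamma(\rvect_k^l)$ stays bounded and (taking $n=0$, using $\kappa(0)>0$) the contribution of spike $k$ to component $(m_k,0)$ is $a_k^l\gamma_{m_k,0}(\rvect_k^l)\approx a_k^l\kappa(0)/(4\pi\Ltn{\rvect_k^l-\mic_{m_k}})$, boundedness of this forces $a_k^l/(4\pi\Ltn{\rvect_k^l-\mic_{m_k}})$ to be bounded, hence (subsequence) convergent to some $\widetilde a_k\ge0$, and then $a_k^l=4\pi\Ltn{\rvect_k^l-\mic_{m_k}}\cdot\big(a_k^l/(4\pi\Ltn{\rvect_k^l-\mic_{m_k}})\big)\to 0\cdot\widetilde a_k=0$.

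The one subtlety I would be careful about is whether, when several spikes collapse onto the \emph{same} microphone, or when a spike collapses onto a microphone while contributing to other microphones' components too, the boundedness argument still isolates each $a_k^l/(4\pi\Ltn{\rvect_k^l-\mic_{m_k}})$. The cleanest route is: group the indices $k$ according to their limit behaviour, write the bounded vector $\xvect-\sum_k a_k^l\gamma(\rvect_k^l)$, and note that for a fixed microphone $m$ and fixed time index $n$, the terms coming from spikes $k$ with $\rvect_k^\infty=\mic_m$ all have the \emph{same} sign asymptotically (since $\kappa(0)>0$ and $a_k^l\ge0$, at $n=0$ every such term is nonnegative and bounded below by a positive multiple of $a_k^l/\Ltn{\rvect_k^l-\mic_m}$), so no cancellation can occur among them; therefore each is individually bounded. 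This sign argument at $n=0$ is the main technical obstacle — everything else is extraction of subsequences and continuity of $\kappa$ and of $\gamma$ off $E_M$. Finitely many extractions (at most $2K$) preserve the minimizing property, so the final subsequence works simultaneously for all $k$.
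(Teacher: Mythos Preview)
Your proposal is correct and follows essentially the same route as the paper: compactness of $\mathscr{C}$ for the positions, amplitude lower-boundedness of $\Gamma^K$ to bound the $a_k^l$, and then the key step of isolating the $(m_k,0)$ component and using $\kappa(0)>0$ together with nonnegativity of the $a_k^l$ to preclude cancellation among spikes collapsing onto the same microphone. The paper's proof is terser but structurally identical, including the grouping of indices $I_{m_k}$ and the sign argument at $n=0$ that you flag as the main technical point.
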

\begin{proof}
    Let $k\in\dint{1}{K}$.
    $(\rvect^l)$ is bounded by definition of $\mathscr C$. The amplitudes $(\avect^l)$ are also bounded as 
    \begin{equation}
        \Ltn{\Gamma^K(\avect^l,\rvect^l)-\xvect}\geq\left\vert\Ltn{\Gamma^K(\avect^l,\rvect^l)}-\Ltn{\xvect}\right\vert\geq C\sum_{k=1}^Ka_k^l-\Ltn{\xvect}
    \end{equation}
    by amplitude lower-boundedness of $\Gamma^K$. We can thus consider a subsequence (still denoted $(\avect^l,\rvect^l)$ with a slight abuse of notation) for which the amplitudes $(\avect^l)$ converge to a certain vector $\avect\in\R_+^K$ and each position $(\rvect_j^l)$ converges to a location $\rvect_j\in\mathscr C\cup E_M$. Case $(i)$ is verified if $\rvect_k\notin E_M$.

    Consider now a spike sequence $(\rvect_k^l)$ converging to a sensor position $\mic_{m_k}$. We define $I_{m_k}\subset\dint{1}{K}$ as the indices of the spikes locations $\rvect_j^l$ converging towards $\mic_{m_k}$ as $l\rightarrow +\infty$. The residual at the first time sample is given by
    \begin{equation}
    c^l_{m_k,0}=x_{m_k,0}-\sum_{j\notin I_{m_k}} a_j^l\gamma_{m_k,0}(\rvect_j^l)-\sum_{j\in I_{m_k}} a_j^l\gamma_{m_k,0}(\rvect_j^l)
    \end{equation}
    
By minimality, $(c^l_{m_k,0})^2$ is necessarily bounded, thus so is $\sum_{j\in I_{m_k}}a_j^l\gamma_{m_k,0}(\rvect_{j}^l)$. Each term of the sum being positive, they are bounded, from which we deduce that  
$$
a_k^l\gamma_{m_k,0}(\rvect_k^l)=a_k^l\frac{\kappa(\Ltn{\rvect_k^l-{\mic_{m_k}}}/c)}{4\pi\Ltn{\rvect_k^l-\mic_{m_k}}}
$$ 
is bounded. By continuity of $\kappa$ at $0$, $\frac{a_k^l}{\Ltn{\rvect_k^l-\mic_{m_k}}}$ is bounded, and therefore the case $(ii)$ arises.
\end{proof}

Using Lemma \ref{prop:behavior}, we now provide a useful expression of the optimal value.

\begin{lemma}\label{prop:limit_cost}
    There exists an integer (possibly equal to 0) $K'\leq K$, a pair $(\avect, \rvect)\in(\R_+)^{K'}\times (\R^3\setminus E_M)^{K'}$, and $\widetilde{\avect}\in\R_+^M$ such that, up to a permutation of the indices, the optimal value of problem \eqref{eq:nonconv} expands as:
    \begin{equation} \label{eq:limitvalue}
        \inf_{(\avect, \rvect)\in\Oset{}{K}}T(\avect, \rvect)=\widetilde{T}(\avect, \rvect,\widetilde{\avect}) \coloneqq \frac{1}{2}\sum_{m=1}^M\sum_{n=0}^{N-1}\left(x_{m,n}-\sum_{k=1}^{K'}a_k\gamma_{m,n}(\rvect_k)-\widetilde{a}_m\kappa(n/f_s)\right)^2
    \end{equation}

    Moreover, there exists a minimizing sequence $(\avect^l, \rvect^l)$ for Problem \eqref{eq:nonconv} such that $(a_k^l,\rvect_k^l)$ converges to $(a_k,\rvect_k)$ for all $k\in\dint{1}{K'}$ and $(a_k^l)$ converges to $0$ for $k\in\dint{K'+1}{K}$.
\end{lemma}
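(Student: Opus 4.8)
\textbf{Proof proposal for Lemma~\ref{prop:limit_cost}.}

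The plan is to start from a minimizing sequence $(\avect^l,\rvect^l)$ for \eqref{eq:nonconv}, pass to the subsequence furnished by Lemma~\ref{prop:behavior}, and then explicitly pass to the limit in the cost functional $T$, tracking separately the spikes of type $(i)$ (which converge to genuine interior positions with a finite limiting amplitude) and the spikes of type $(ii)$ (which collapse onto microphone positions). First I would relabel the indices so that $k\in\dint{1}{K'}$ are exactly the spikes of type $(i)$, with $\rvect^l_k\to\rvect_k\in\mathscr{C}\subset\R^3\setminus E_M$ and $a^l_k\to a_k\in\R_+$, while $k\in\dint{K'+1}{K}$ are of type $(ii)$, so that $a^l_k\to 0$ and $\rvect^l_k\to\mic_{m_k}$ for some $m_k\in\dint{1}{M}$, with $a^l_k/\bigl(4\pi\Ltn{\rvect^l_k-\mic_{m_k}}\bigr)\to\widetilde a_k\in\R_+$.

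For the type-$(i)$ spikes, continuity of $\gamma$ on $\R^3\setminus E_M$ (which holds because $\rvect_k\neq\mic_m$ for all $m$, so the Green kernel is not singular there) gives $a^l_k\gamma_{m,n}(\rvect^l_k)\to a_k\gamma_{m,n}(\rvect_k)$ for every $(m,n)$. For the type-$(ii)$ spikes, the key observation is that the contribution of $\rvect^l_k$ to channel $m$ \emph{at sample $n$} is
\[
a^l_k\,\gamma_{m,n}(\rvect^l_k)=\frac{a^l_k}{4\pi\Ltn{\rvect^l_k-\mic_m}}\,\kappa\!\bigl(n/f_s-\Ltn{\rvect^l_k-\mic_m}/c\bigr).
\]
If $m=m_k$, then $\Ltn{\rvect^l_k-\mic_{m_k}}\to 0$, so $\kappa\bigl(n/f_s-\Ltn{\rvect^l_k-\mic_{m_k}}/c\bigr)\to\kappa(n/f_s)$ by continuity of $\kappa$, and the prefactor tends to $\widetilde a_k$; hence this contribution tends to $\widetilde a_k\kappa(n/f_s)$. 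If $m\neq m_k$, then $\Ltn{\rvect^l_k-\mic_m}\to\Ltn{\mic_{m_k}-\mic_m}>0$, so the prefactor $a^l_k/(4\pi\Ltn{\rvect^l_k-\mic_m})\to 0$ while $\kappa(\cdots)$ stays bounded, so the contribution vanishes. Thus, for each $m$, collecting over the type-$(ii)$ spikes that accumulate at $\mic_m$, the total limiting contribution is $\widetilde a_m\kappa(n/f_s)$ with $\widetilde a_m:=\sum_{k:\,m_k=m}\widetilde a_k$ (and $\widetilde a_m=0$ if no spike accumulates at $\mic_m$). Summing all contributions, the residual at $(m,n)$ converges to $x_{m,n}-\sum_{k=1}^{K'}a_k\gamma_{m,n}(\rvect_k)-\widetilde a_m\kappa(n/f_s)$, so $T(\avect^l,\rvect^l)\to\widetilde T(\avect,\rvect,\widetilde\avect)$. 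Since the left side is the infimum of \eqref{eq:nonconv}, this proves \eqref{eq:limitvalue}; the approximating sequence claimed at the end of the statement is then obtained by setting $a^l_k=0$ for $k\in\dint{K'+1}{K}$ and keeping the convergent pieces, possibly after re-verifying it is still minimizing (it is, since we have only replaced vanishing amplitudes by $0$, which changes $T$ by $o(1)$).

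The main obstacle is bookkeeping rather than a deep difficulty: one must argue cleanly that \emph{all} the mass collapsing onto a microphone is captured by the single scalar $\widetilde a_m$, i.e. that distinct type-$(ii)$ spikes accumulating at the same $\mic_m$ combine additively in the limit (immediate, since the maps are linear in the amplitudes) and that the limit $\widetilde a_m$ is finite — this is exactly the content of Lemma~\ref{prop:behavior}$(ii)$, which bounds $a^l_k/\Ltn{\rvect^l_k-\mic_{m_k}}$. A secondary point needing care is the off-diagonal decay ($m\neq m_k$), where one uses that microphone positions are distinct so $\Ltn{\mic_{m_k}-\mic_m}>0$; this is implicit in $E_M$ being a set of $M$ distinct points. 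Finally, one should note the limit pair $(\avect,\rvect)$ need not itself be feasible for \eqref{eq:nonconv} when $K'<K$ (it lives in a lower-dimensional stratum), which is precisely why the lemma records the limiting value through the auxiliary functional $\widetilde T$ rather than claiming attainment — the attainment question is then settled separately in Lemma~\ref{prop:lemma_crit} and Theorem~\ref{th:existence1}.
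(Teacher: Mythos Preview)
Your argument is essentially the paper's own proof: start from a minimizing sequence, invoke Lemma~\ref{prop:behavior}, split into type-$(i)$ and type-$(ii)$ spikes, pass to the limit by continuity for the former and by the explicit computation $a^l_k\gamma_{m,n}(\rvect^l_k)\to\widetilde a_k\kappa(n/f_s)\mathbf{1}_{m=m_k}$ for the latter, and collect the collapsed mass at each microphone into $\widetilde a_m$. This is correct and matches the paper's route.

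There is one slip at the end. For the ``moreover'' clause you propose to \emph{set} $a^l_k=0$ for $k>K'$ and claim this changes $T$ by $o(1)$ ``since we have only replaced vanishing amplitudes by $0$''. That is false: although $a^l_k\to 0$, the contribution $a^l_k\gamma_{m_k,n}(\rvect^l_k)\to\widetilde a_k\kappa(n/f_s)$ does \emph{not} vanish (this is precisely the content of case $(ii)$), so zeroing those amplitudes shifts the limit from $\widetilde T(\avect,\rvect,\widetilde\avect)$ to $\widetilde T(\avect,\rvect,0)$ and the modified sequence is no longer minimizing in general. The fix is simply to observe that the \emph{original} extracted subsequence already satisfies the ``moreover'' claim, since Lemma~\ref{prop:behavior}$(ii)$ gives $a^l_k\to 0$ for all type-$(ii)$ indices; no modification is needed.
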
 

\begin{proof}

Consider an arbitrary minimizing sequence $(\avect^l,\rvect^l)$ for problem \eqref{eq:nonconv}.
Lemma \ref{prop:behavior} shows that, up to extracting a sub-sequence and permuting the indices, we can assume that the first $K'\in\dint{0}{K}$ spikes positions converge to locations $\rvect_k$ that are distinct from the sensor locations, while the remaining spikes converge to positions in $E_M$.
By continuity of the kernel, $\sum_{k=1}^{K'}a_k^l\gamma_{m,n}(\rvect_k^l)$ converges to $\sum_{k=1}^{K'}a_k\gamma_{m,n}(\rvect_k)$ for all $m, n$.

Consider now the spikes that converge to some microphone location  in $E_M$. 
For $m\in\dint{1}{M}$ we define as in the previous proof $I_m \subset \dint{K'+1}{K}$ as the set of indices $k$ such that $\rvect_k=\mic_m$. 
Observe that by Lemma \ref{prop:behavior} if several spikes converge to the same microphone $m$ their contributions share the same sign and can then be summed: 
\begin{equation}
    \forall m, n ,\quad \sum_{k\in I_m} \ts{a_{k}^l\gamma_{m,n}(\rvect^l_{k})} \xrightarrow[l\rightarrow+\infty]{}
    \kappa(n/f_s)\sum_{k\in I_m} \widetilde{a}_{k}=\kappa(n/f_s)\widetilde{a}_m,\quad \widetilde{a}_m\in\R_+.
\end{equation}
Moreover, if a spike $\rvect^l_{k}$ converges to a microphone location $\mic_m$, the corresponding amplitude $a_k^l$ converges to $0$, hence:
\begin{equation}
    \forall m\neq m'\in\dint{1}{M}, \; \forall k \in I_{m'},\quad a_{k}^l\gamma_{m,n}(\rvect^l_{k}) \xrightarrow[l\rightarrow+\infty]{} 0.
\end{equation}
Thus, a spike that converges to a microphone contributes only to the terms related to that particular receiver in the cost function, which justifies formula \eqref{eq:limitvalue}. If none of the spikes converge to a given microphone $m$, the corresponding coefficient $\widetilde{a}_m$ is zero. 
\end{proof}

In the following lemma, we state a numerical condition that guarantees the existence of a minimizer for Problem~\eqref{eq:nonconv}.
\begin{lemma}\label{prop:lemma_crit}
    Let $K'\leq K$, $(\avect, \rvect)\in(\R_+)^{K'}\times (\R^3\setminus E_M)^{K'},\;\widetilde{\avect}\in\R_+^M$
    such that $\inf_{(\avect, \rvect)\in\Oset{}{K}}T(\avect, \rvect)=\widetilde{T}(\avect, \rvect,\widetilde{\avect})$. 
     If the pair $(\avect, \rvect)$ is such that
     \begin{equation}     \label{eq:ineq_criterion}
         \forall m\in\dint{1}{M},\quad \sum_{n=0}^{N-1}x_{m,n}\kappa(n/f_s)\leq \sum_{k=1}^{K'}\sum_{n=0}^{N-1}a_k\kappa(n/f_s)\gamma_{m,n}(\rvect_k),
     \end{equation}
    then $\widetilde \avect = 0$ and Problem~\eqref{eq:nonconv} has a solution.
\end{lemma}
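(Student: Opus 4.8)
The plan is to exploit the separable structure of the quadratic map $\widetilde\avect\mapsto\widetilde T(\avect,\rvect,\widetilde\avect)$ in \eqref{eq:limitvalue}. Fix the pair $(\avect,\rvect)$ and the vector $\widetilde\avect$ provided by Lemma~\ref{prop:limit_cost}, write $\kappa_n:=\kappa(n/f_s)$ and $r_{m,n}:=x_{m,n}-\sum_{k=1}^{K'}a_k\gamma_{m,n}(\rvect_k)$, so that
$$
\widetilde T(\avect,\rvect,\widetilde\avect)=\frac12\sum_{m=1}^{M}g_m(\widetilde a_m),\qquad g_m(s):=\sum_{n=0}^{N-1}\bigl(r_{m,n}-s\kappa_n\bigr)^2 .
$$
Each $g_m$ is a one–dimensional quadratic whose leading coefficient $\sum_{n=0}^{N-1}\kappa_n^2$ is at least $\kappa(0)^2>0$ by \eqref{hyp:filter}, hence $g_m$ is strictly convex and has a unique minimizer on $\R_+$. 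Since $g_m'(0)=-2\sum_{n=0}^{N-1}r_{m,n}\kappa_n$, the point $s=0$ minimizes $g_m$ over $\R_+$ if and only if $g_m'(0)\geq 0$, i.e. $\sum_{n=0}^{N-1}r_{m,n}\kappa_n\leq 0$. Expanding $r_{m,n}$, this condition reads $\sum_{n=0}^{N-1}x_{m,n}\kappa(n/f_s)\leq\sum_{k=1}^{K'}\sum_{n=0}^{N-1}a_k\kappa(n/f_s)\gamma_{m,n}(\rvect_k)$, which is exactly hypothesis \eqref{eq:ineq_criterion}. Therefore, under \eqref{eq:ineq_criterion}, $0$ minimizes $g_m$ over $\R_+$ for every $m$, so $\widetilde\avect=0$ minimizes $\widetilde T(\avect,\rvect,\cdot)$ over $\R_+^M$; in particular $g_m(0)\leq g_m(\widetilde a_m)$ for all $m$.

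The second step is to turn the limiting configuration into a genuine feasible point of \eqref{eq:nonconv}. By Lemma~\ref{prop:behavior}(i) we have $\rvect_k\in\mathscr C$ for $k\in\dint{1}{K'}$, and $\mathscr C\neq\varnothing$ because a minimizing sequence of \eqref{eq:nonconv} exists; so we may pick arbitrary points $\rvect_{K'+1},\dots,\rvect_K\in\mathscr C$ (no padding is needed if $K'=K$). The configuration $\widehat\avect:=(a_1,\dots,a_{K'},0,\dots,0)\in\R_+^K$ and $\widehat\rvect:=(\rvect_1,\dots,\rvect_K)\in\mathscr C^K$ lies in $\Oset{}{K}$, and a direct computation gives $T(\widehat\avect,\widehat\rvect)=\tfrac12\Ltn{\xvect-\sum_{k=1}^{K'}a_k\gamma(\rvect_k)}^2=\widetilde T(\avect,\rvect,0)$.

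It remains to close the chain of inequalities. On one hand $T(\widehat\avect,\widehat\rvect)=\widetilde T(\avect,\rvect,0)\geq\inf_{\Oset{}{K}}T=\widetilde T(\avect,\rvect,\widetilde\avect)$ by Lemma~\ref{prop:limit_cost}; on the other hand $\widetilde T(\avect,\rvect,0)\leq\widetilde T(\avect,\rvect,\widetilde\avect)$ by the first step. Hence $\widetilde T(\avect,\rvect,0)=\widetilde T(\avect,\rvect,\widetilde\avect)=\inf_{\Oset{}{K}}T$, so $(\widehat\avect,\widehat\rvect)$ attains the infimum and Problem~\eqref{eq:nonconv} has a solution. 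Moreover $\sum_m g_m(0)=\sum_m g_m(\widetilde a_m)$ together with $g_m(0)\leq g_m(\widetilde a_m)$ for each $m$ forces $g_m(0)=g_m(\widetilde a_m)$, and strict convexity of $g_m$ then yields $\widetilde a_m=0$ for all $m$, i.e. $\widetilde\avect=0$.

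I do not expect a serious obstacle here: the argument is essentially the first–order optimality condition at the boundary point $\widetilde\avect=0$ of a separable strictly convex quadratic, and \eqref{eq:ineq_criterion} is precisely that condition. The only points requiring a little care are the bookkeeping needed to pad the limiting $K'$–spike configuration back into a feasible point of the original $K$–spike problem, and invoking strict convexity (via $\kappa(0)>0$) to upgrade "$0$ is \emph{a} minimizer" to "$\widetilde\avect=0$".
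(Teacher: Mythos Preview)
Your proof is correct and follows essentially the same approach as the paper: exploit the separable quadratic structure of $\widetilde\avect\mapsto\widetilde T(\avect,\rvect,\widetilde\avect)$, show that under \eqref{eq:ineq_criterion} the origin minimizes each coordinate function over $\R_+$, and then pad the $K'$-spike configuration with zero-amplitude spikes to obtain a feasible minimizer of \eqref{eq:nonconv}. The only cosmetic difference is that the paper computes the unconstrained minimizer $\widetilde b_m^*$ of each $\widetilde T^m$ and checks its sign, whereas you use the equivalent boundary first-order condition $g_m'(0)\geq 0$; your final paragraph, invoking strict convexity (via $\kappa(0)>0$) to pass from $\sum_m g_m(0)=\sum_m g_m(\widetilde a_m)$ to $\widetilde a_m=0$ for each $m$, is in fact more explicit than the paper's version, which asserts $\widetilde\avect=0$ somewhat tersely.
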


\begin{proof}

Let $K'<K$, $(\avect, \rvect,\widetilde \avect)$ yielding a decomposition of the optimal value as specified in Lemma \ref{prop:limit_cost} and consider the following quadratic optimization program
\begin{equation}\label{pb:limit_cost}
    \inf_{\bvect\in\R^M}\widetilde{T}(\avect, \rvect,\bvect)
    =\inf_{\bvect\in\R^M}\sum_{m=1}^M \widetilde{T}^m(b_m)
\end{equation}
where $\widetilde{T}^m:t\mapsto \frac{1}{2} \sum_{n=0}^{N-1}\left(x_{m,n}-\sum_{k=1}^{K'}a_k\gamma_{m,n}(\rvect_k)-t\kappa(n/f_s)\right)^2$. Note that $\widetilde{T}^m$  is a positive, convex quadratic polynomial of the real variable $t$. Denoting by $\widetilde{b}_m^*$ the minimizer of $\widetilde{T}^m$ over $\R$, its minimizer over $\R_+$ is necessarily $\max(\widetilde{b}_k^*,0)$. 
We deduce that if every component $\widetilde{b}_m^*$ is negative, then the coefficients $\widetilde{a}_m$ are all zero and consequently there exists a solution to problem \eqref{eq:nonconv}.
Indeed, we obtain in this case $T(\avect',\rvect') = \inf_{(\avect, \rvect)\in\Oset{}{K}}T(\avect, \rvect)$, with 
\begin{equation}
    (a_k',\rvect_k')= \left\{\begin{array}{cc} \label{eq:fun_orthogonal}
        (a_k,\rvect_k) & \text{if } k\leq K'\\
        (0, \vvect) & \text{otherwise}
    \end{array}
    \right.
\end{equation}
where $\vvect$ is an arbitrary location distinct from the microphones.
Note that $\widetilde{\bvect}^*$ is given by the first order optimality conditions for each function $\widetilde{T}^m$:
\begin{equation} \label{eq:opticond}
   \forall m\in\dint{1}{M},\quad
   \sum_{n=0}^{N-1}\kappa(n/f_s)\left(x_{m,n}-\sum_{k=1}^{K'}a_k\gamma_{m,n}(\rvect_k)-\widetilde{b}^*_m\kappa(n/f_s)\right)=0
\end{equation}
and thus,
\begin{equation}
   \forall m\in\dint{1}{M},\quad
   \sum_{n=0}^{N-1}\kappa(n/f_s)^2\widetilde{b}^*_m=\sum_{n=0}^{N-1}\kappa(n/f_s)\left(x_{m,n}-\sum_{k=1}^{K'}a_k\gamma_{m,n}(\rvect_k)\right).
\end{equation}

Since $\sum_{n=0}^{N-1}\kappa(n/f_s)^2\geq \kappa(0)^2 >0$, we infer:
\begin{equation}
    \forall m\in\dint{1}{M},\quad\widetilde{b}^*_m\leq 0 \iff \sum_{n=0}^{N-1}\kappa(n/f_s)\left(x_{m,n}-\sum_{k=1}^{K'}a_k\gamma_{m,n}(\rvect_k)\right)\leq 0
\end{equation}
 which is exactly \eqref{eq:ineq_criterion}.
\end{proof}

We can now prove Theorem \ref{th:existence1}.

\begin{proof}[Proof of Theorem \ref{th:existence1}]
In order to get a global existence criterion on the observation vector $\xvect$ and the operator $\Gamma^K$, we only need to compute a uniform lower bound of the right hand side of inequality \eqref{eq:ineq_criterion}. We consider a decomposition of the optimal value as given by Lemma~\ref{prop:limit_cost} and keep the same notations.

Consider $\phi$ as defined in Theorem \ref{th:existence1}. Using \eqref{hyp:filter}, we infer that $\phi$ is finite by continuity and boundedness of the kernel $\kappa$. 
Let $m\in\dint{1}{M}$. We distinguish two cases based on the sign of $\phi$. \\

\noindent {\it Proof under the assumption $(i)$.} Assume $\phi\leq 0$. Consider $(\avect, \rvect,\widetilde{\avect})$ a decomposition of the optimal value as given by Lemma \ref{prop:limit_cost}, $K'$ the associated truncating integer, and a corresponding minimizing sequence $(\avect^l,\rvect^l)$. If taking a null vector of amplitudes yields a solution to problem \eqref{eq:nonconv}, then we are done. Otherwise, using the amplitude lower-boundedness hypothesis we obtain the following inequalities for all $l$ large enough:
$$
   \Ltn{\xvect-\Gamma^K(0,\rvect^l)}=\Ltn{\xvect} \geq \Ltn{\xvect-\Gamma^K(\avect^l,\rvect^l)}\geq
   \Ltn{\Gamma^K(\avect^l,\rvect^l)} - \Ltn{\xvect} \geq C \sum_{k=1}^{K}a_k^l - \Ltn{\xvect}.
$$
Letting $l$ go to infinity, we obtain $\frac{2}{C}\Ltn{\xvect}\geq \sum_{k=1}^{K'}a_k$, from which we infer
$$
     \quad\sum_{n=0}^{N-1}\sum_{k=1}^{K'}\kappa(n/f_s)a_k\gamma_{m,n}(\rvect_k)=\sum_{k=1}^{K'}a_k\sum_{n=0}^{N-1}\frac{\kappa(n/f_s)\kappa(n/f_s-\Ltn{\rvect_k-\mic_m}/c)}{4\pi\Ltn{\rvect_k-\mic_m}}\geq \frac{2\phi}{C}\Ltn{\xvect}.
$$
By using $(i)$, we finally get that  \eqref{eq:ineq_criterion} is true, whence the result.\\

\noindent {\it Proof under the assumption $(ii)$.} Assume $\phi\geq 0$. Then we obviously have:
\begin{equation}
 \forall m\in\dint{1}{M},\quad   \sum_{n=0}^{N-1}\sum_{k=1}^{K'}\kappa(n/f_s)a_k\gamma_{m,n}(\rvect_k)
     \geq 0\geq \mu_m,
\end{equation}
and \eqref{eq:ineq_criterion} is true.
Applying Lemma \ref{prop:lemma_crit} yields the expected conclusion.
\end{proof}

\subsection{Proofs of Proposition~\ref{prop:alb_discrete}, Corollary~\ref{cor:prop:alb_discrete} and Corollary~\ref{prop:alb_crit}}

\begin{proof}[Proof of Proposition~\ref{prop:alb_discrete}]  
 We only need to consider amplitude vectors $\avect$ such that $\sum_k a_k > 0$. By equivalence of the norms in finite dimension and homogeneity, considering the set of convex weights $H=\{\alphavect\in\R_+^K,\;\sum_{k=1}^K\alpha_k=1\}$, one has to show:
     \begin{equation}
        \exists C\in\R^*_+,\;\forall r\in \mathscr{C}^{K},\; \forall \alpha\in H, \quad \Lon{\Gamma^K(\alpha,r)}\geq C .
    \end{equation}
 Let \ts{$J:\avect, \rvect\mapsto \Lon{\Gamma^K(\avect, \rvect)}$} and $(\alphavect^l, \rvect^l)$ a minimizing sequence for $\inf_\Lambda J$, where $\Lambda = H\times \mathscr{C}^K$. As $\Lambda$ and $\mathscr{C}$ are bounded, $(\alphavect^l, \rvect^l)$  converges up to a subsequence to some $(\alphavect^*,\rvect^*)$ where $\alphavect^*\in H$ and $\rvect^*\in 
    \left( \mathscr C \cup E_M \right)^K$. 
    Observe that if a spike $\rvect_k$ converges to a microphone location, the corresponding amplitude converges to $0$.
    Indeed, let $m\in\dint{1}{M}$ and let $I_m\subset \dint{1}{K}$ be the set of indices $k$ such that $\rvect_k^*=\mic_m$. Assume that $I_m$ is non-empty, \emph{i.e.} there exists a spike position converging to microphone $\mic_m$.
     We have:
     \begin{equation}
        \Lon{\Gamma^K(\avect^l,\rvect^l)} \geq \sum_{k=1}^K\alpha_k^l\gamma_{m,0}(\rvect_k^l)\underset{l\rightarrow +\infty}{\sim} \sum_{k\in I_m}\alpha_k^l\gamma_{m,0}(\rvect_k^l) + \sum_{k\notin I_m}\alpha_k^*\gamma_{m,0}(\rvect_k^*).
     \end{equation}
     The sum $\sum_{k\in I_m}\alpha_k^l\gamma_{m,0}(\rvect_k^l)$ being bounded and since $\kappa(0)>0$, each term of the sum is positive and must consequently be bounded:
    \begin{equation}
        \alpha_k^l\gamma_{m,0}(\rvect_k^l)\underset{l\rightarrow +\infty}{\sim}\frac{\alpha^l_k}{4\pi\Ltn{\rvect_k^l-\mic_m}}\kappa(0) =\operatorname{O}\left(1\right), \quad \forall k\in I_m.
    \end{equation}
    
    Hence if $(\rvect_k^l)$ converges to a microphone $m$ for some $k$, then $\alpha_k^*=0$ and $\frac{\alpha_k^l}{4\pi\Ltn{\rvect_k^l-\mic_m}}$ converges up to a subsequence to some nonnegative value $c^*_k$. Let $0<K'\leq K''\leq M$ such that:
    \begin{equation}
        \left\{\begin{array}{l}
            \forall k\in\dint{1}{K'},\; \alpha_k^*>0 \text{ and } \rvect_k^*\in\mathscr{C}\\
            \forall k\in\dint{K'+1}{K''},\; \alpha_k^*=0\text{ and } \lim_{l\rightarrow +\infty}\rvect_k^l=\mic_{m_k}\in E_M\\
            \forall k\in\dint{K''+1}{K},\; \alpha_k^*= 0 \text{ and } \rvect_k^*\in\mathscr{C}.
        \end{array}\right.
    \end{equation}
    Note that $K'\neq 0$ as $\alpha^*\in H$.
    We have:
    \begin{equation}
\forall l\in\N, \quad    J (\avect^l, \rvect^l) = 
       \sum_{m=1}^M\sum_{n=0}^{N-1}\left\vert\sum_{k=1}^{K}\alpha_k^l\gamma_{m,n}(\rvect^l_k)\right\vert\geq 
        \sum_{m=1}^M\sum_{n=0}^{N-1}\sum_{k=1}^{K}\alpha_k^l\gamma_{m,n}(\rvect^l_k)
        .
    \end{equation}
    Letting $l$ go to infinity, we get:

    \begin{equation}\label{eq:ineqJ}
        \inf_\Lambda J \geq \sum_{m=1}^{M}\sum_{k=1}^{K'}\frac{\alpha_k^*}{4\pi\Ltn{\rvect_k^*-\mic_m}}\sum_{n=0}^{N-1}\kappa\left(\frac{n}{f_s}-\frac{4\pi\Ltn{\rvect^*_k-\mic_m}}{c}\right)+\sum_{k=K'+1}^{K''}c_k^*\sum_{n=0}^{N-1}\kappa(n/f_s).
    \end{equation}
    By construction of $\mathscr{C}$, $0\leq \frac{\Ltn{\rvect_k^*-\mic_m}}{c}\leq(N-1)/f_s$ for $1\leq k\leq K'$, $1\leq m\leq M$. 
    Inequality \eqref{eq:alb_discrete} then guarantees that the term $\frac{\alpha_k^*}{4\pi\Ltn{\rvect_k^*-\mic_m}}\sum_{n=0}^{N-1}\kappa\left(\frac{n}{f_s}-\frac{\Ltn{\rvect_k^*-\mic_m}}{c}\right)$  is positive for all $m\in\dint{1}{K'}$ and $k\in\dint{1}{K'}$. Likewise, the terms $c_k^*\sum_{n=0}^{N-1}\kappa\left(\frac{n}{f_s}\right)$ are nonnegative for $K'<k\leq K''$, thus the right-hand side in \eqref{eq:ineqJ} is positive. 
\end{proof}

\begin{proof}[Proof of Corollary~\ref{cor:prop:alb_discrete}] 
   We call $\kappa^\text{lp}$ the continuous extension of $\kappa^\text{lp}$ at $0$. 
    We will prove that $\kappa^\text{lp}$ satisfies \eqref{eq:alb_discrete}.
    Let $\tau\in[0,(N-1)/f_s]$, $n^+\in \N$ the smallest index such that $n^+/f_s\geq\tau$, and $n^-$ the largest index such that $n^-/f_s\leq\tau$. Let us set $y_n=\operatorname{sinc}(\pi f_s (n/f_s-\tau))$ for all $n$. One expands:
    \begin{equation}
        \sum_{n=0}^{N-1} y_n=
        \sum_{n=0}^{n^-} y_n
        +\sum_{n=n^+}^{N-1} y_n - \delta_{n^-,n^+},
    \end{equation}
where $\delta$ denotes the Kronecker symbol and handles the case where $\tau$ is exactly equal to one of the time samples, as $\kappa^\text{lp}(0) = 1$.
    \begin{figure}[h!] 
    \centering 
    \includegraphics[width=0.3\linewidth]{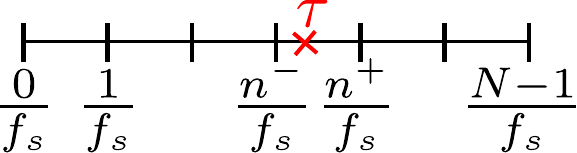}
    \caption{} \label{fig:notations_proof_lab}
\end{figure}  

Let us first assume that $n^-<n^+$. 
Let $l\in\dint{0}{N-n^+-1}$, we have
    \begin{equation}
     y_{n^+ +l}=
    \frac{\sin(\pi f_s(n^+/f_s-\tau)+l\pi)}{\pi f_s((n^+ +l)/f_s-\tau)}=
    (-1)^l\frac{\sin(\pi f_s(n^+/f_s-\tau))}{\pi f_s((n^+ +l)/f_s-\tau)}.
    \end{equation}
    Note that $\sin(\pi f_s(n^+/f_s-\tau))>0$ as $\pi f_s(n^+/f_s-\tau)\in(0,\pi)$, and $y_{n^+}>0$.
    By the alternating series theorem, the sum carries the sign of its first term. Indeed, we get the upper bound:
        \begin{equation}
        \left\vert\sum_{n=n^+ +1}^{N-1}y_n\right\vert\leq \vert y_{n^++1}\vert=
        \frac{\sin(\pi f_s(n^+f_s-\tau))}{\pi f_s((n^+ +1)/f_s-\tau)}
        \end{equation}
    thus $\sum_{n=n^++1 }^{N-1}y_n\geq-\vert y_{n^++1}\vert$ and
$$
\sum_{n=n^+ }^{N-1}y_n\geq y_{n^+}-\vert y_{n^++1}\vert =
    \sin(\pi f_s(n^+f_s-\tau))\left(\frac{1}{\pi f_s(n^+/f_s-\tau)}-\frac{1}{\pi f_s(n^+ +1)/f_s-\tau)}\right)>0.
$$
Likewise, the sum $\sum_{n=0}^{n^-} y_n$ is non-zero and shares the sign of $\frac{\sin(\pi f_s(n^-/f_s-\tau))}{\pi f_s(\tau-n^-/f_s)}$, which is also positive.
        
    In the case where $n^-=n^+$, we have $\tau=n^+/f_s$ and $y_{n^+}=1$ and then,
    \begin{equation}
       \forall n\in \dint{0}{N-1},\quad \pi f_s(n/f_s-\tau)=\pi (n-n^+)\in\pi\mathbb{Z}.
    \end{equation}
It follows that $y_n=\delta_{n,n^+}$ and $\sum_{n=0}^{N-1} y_n=1$, which concludes the proof.
\end{proof}

\begin{proof}[Proof of Corollary~\ref{prop:alb_crit}]
 Let $\tau \in [0,T_{\max}]$ be given. 
 Assume that the sampling frequency $f_N$ is chosen such that $T_{\max} = (N-1)/f_N$. 
  Observing that the sum in inequality \eqref{eq:alb_discrete} is a Riemann sum
    \begin{equation}
        S_N(\tau)\coloneqq\sum_{n=0}^{N-1} \kappa(n/f_N-\tau) = \sum_{n=0}^{N-1} \kappa\left(n\frac{T_{\max}}{N-1}-\tau\right),
    \end{equation}
it follows that $S_N$ converges uniformly towards $\tau \mapsto \int_{0}^{T_{\max}}\kappa(t-\tau)dt$ over $[0,T_{\max}]$ as $N\to +\infty$.
%
    Thus for $N$ large enough, inequality \eqref{eq:alb_discrete} is satisfied and $\Gamma^K$ is amplitude lower-bounded.
\end{proof}

\section{A super-resolution type algorithm}
\label{sect:IS_inv_theory}

The discussion in Section~\ref{sec:analysis} leads us to define a slightly modified version of Problem~\eqref{eq:nonconv}  to avoid the potential pathologies described above.
In order to avoid non-existence caused by the singularities, we will enforce a small distance $\varepsilon>0$ to the microphone positions. Let us hence introduce
$$
\Reps = \R^3\setminus \bigcup_{m\in\dint{1}{M}}B(\mic_m,\varepsilon).
$$
In our numerical approach, we will reformulate the problem as a BLASSO-type problem. Thus, it is relevant to add a $l_1$ regularization term to the cost function.
We thus consider the problem
\begin{equation}
    \label{eq:nonconv_eps_l1}
    \tag{\mbox{$\ts{\mathscr{P}^{K}_{\lambda, \varepsilon}}$}}
    \boxed{
    \inf_{(\avect, \rvect)\in \Oset{\varepsilon}{K}}T_\lambda(\avect, \rvect) \quad \text{with}\quad \Oset{\varepsilon}{K}=\R_+^{K}\times {(\Reps)}^K \text{ and } 
    T_\lambda(\avect, \rvect) = T(\avect, \rvect)+\lambda\sum_{k=1}^K a_k.}
\end{equation} 

{
\begin{remark}[Numerical perspectives]
In this work, noting that singularities in the criterion could obstruct existence and induce pathological behavior, we chose to modify the space of admissible $\rvect$ by enforcing a minimal distance from the singularities. From a numerical standpoint, this entails an additional projection step, 
which is not activated in most practical cases.
This aspect will be further discussed in Section~\ref{sect:numAlgo}. Nevertheless, alternative strategies based on modifying the criterion itself would have been equally legitimate.
\end{remark}}

\subsection{\texorpdfstring{Analysis of Problem \eqref{eq:nonconv_eps_l1}}{Analysis of Problem}}
The maximal distance constraint on the spike locations is no longer needed, as the regularization term forces an upper bound on the amplitudes $\avect$. Any spike vanishing at infinity hence has a null contribution to the objective function. {We now turn to the question of existence for problem~\eqref{eq:nonconv_eps_l1}, which is ensured under very weak assumptions.} Note that we can also define problem \eqref{eq:nonconv_eps_l1} in the case $\varepsilon=0$ by optimizing the cost function on $\Oset{}{K}$.
\begin{proposition}
    Let $\varepsilon>0$, $\lambda\geq 0$. Then, if at least one of the following assumptions holds, problem \eqref{eq:nonconv_eps_l1} has a solution:
    \begin{enumerate}[(i)]
        \item $\lambda > 0$
        \item $\Gamma^K$ is amplitude lower-bounded (see~Definition~\ref{def:alb}). 
    \end{enumerate}
\end{proposition}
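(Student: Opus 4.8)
The plan is to handle the two cases separately, since they invoke different mechanisms for controlling a minimizing sequence $(\avect^l,\rvect^l)$. In both cases, the spike positions $\rvect_k^l$ live in $\Reps$, which is \emph{not} compact, so the key point is to rule out mass escaping either to infinity or — and this is the subtler issue absent in the classical setting — towards the removed balls $B(\mic_m,\varepsilon)$; but since we now optimize over $\Reps$, the boundary sphere $\partial B(\mic_m,\varepsilon)$ is at positive distance $\varepsilon$ from the singularities, so $\gamma$ is bounded on $\Reps$ and continuous there, which tames that second issue. Concretely, for any $\rvect\in\Reps$ and any $(m,n)$ one has $|\gamma_{m,n}(\rvect)|\leq \infn{\kappa}/(4\pi\varepsilon)$, so $\gamma$ is uniformly bounded on $\Reps$; moreover $\gamma_{m,n}(\rvect)\to 0$ as $\Ltn{\rvect}\to+\infty$ by \eqref{hyp:filter}(ii). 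I would record these two facts first.

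For case $(i)$, $\lambda>0$: take a minimizing sequence $(\avect^l,\rvect^l)$ for $T_\lambda$. Comparing with the admissible point $(\bm 0,\rvect^l)$ gives $T_\lambda(\avect^l,\rvect^l)\leq T(\bm 0,\rvect^l)=\tfrac12\Ltn{\xvect}^2$ for $l$ large, hence $\lambda\sum_{k=1}^K a_k^l \leq \tfrac12\Ltn{\xvect}^2 + o(1)$, so the amplitudes are bounded; up to a subsequence $a_k^l\to a_k\in\R_+$. For the positions, split the index set into those $k$ for which $(\rvect_k^l)$ stays bounded and those for which $\Ltn{\rvect_k^l}\to+\infty$ (up to a further subsequence). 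For the bounded indices, $\rvect_k^l\to\rvect_k$ in the closed set $\Reps$ (closedness of $\Reps$ is what makes the limit admissible). For the unbounded indices, $a_k^l\gamma(\rvect_k^l)\to 0$ because $a_k^l$ is bounded and $\gamma(\rvect_k^l)\to 0$; assigning these spikes a null amplitude and an arbitrary fixed location in $\Reps$ does not change the limiting value of $T$, while it can only decrease (or leave unchanged) the regularizer. Passing to the limit using continuity of $\gamma$ on $\Reps$ and lower semicontinuity of $T_\lambda$ (it is in fact continuous along the extracted subsequence once the escaping spikes are zeroed out) yields an admissible minimizer.

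For case $(ii)$, $\lambda=0$ but $\Gamma^K$ amplitude lower-bounded: here the comparison $\Ltn{\Gamma^K(\avect^l,\rvect^l)-\xvect}\leq\Ltn{\xvect}$ (again using $(\bm 0,\rvect^l)$ is admissible and the sequence is minimizing) together with the reverse triangle inequality and $\Ltn{\Gamma^K(\avect^l,\rvect^l)}\geq C\sum_k a_k^l$ forces $\sum_k a_k^l\leq \tfrac2C\Ltn{\xvect}+o(1)$, i.e. bounded amplitudes — this is exactly the argument already used in Lemma~\ref{prop:behavior}, and I would simply cite it. From there the position analysis is identical to case $(i)$: bounded spikes converge inside the closed set $\Reps$, escaping spikes have vanishing contribution and can be frozen at any fixed admissible location, and continuity of $\gamma$ on $\Reps$ lets one pass to the limit in $T$. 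Note that in the case $\varepsilon=0$ mentioned in the statement's surrounding text, the boundedness of $\gamma$ near the microphones is lost, which is precisely why that borderline case is not covered by this proposition and was instead the subject of the counterexamples in Section~\ref{sec:analysis}.

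The main obstacle — really the only non-bookkeeping point — is the non-compactness of $\Reps$: one must argue that spikes escaping to infinity carry asymptotically no energy (via \eqref{hyp:filter}(ii)) and can therefore be discarded from the candidate minimizer without loss, and that the remaining spikes converge to \emph{admissible} limits, which is where the closedness of $\Reps$ (as opposed to the openness of the original singular-point-deleted domain) does the work. Everything else is the standard direct method: bounded minimizing sequence, extraction, lower semicontinuity.
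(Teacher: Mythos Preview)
Your proposal is correct and follows essentially the same approach as the paper's proof: bound the amplitudes of a minimizing sequence (via coercivity of the $\ell^1$ term when $\lambda>0$, or via the amplitude lower-bound when $\lambda=0$), then neutralize spikes escaping to infinity and extract a convergent subsequence whose limit is admissible because $\Reps$ is closed. Your treatment is in fact slightly more careful on one point: you zero out the amplitude of each escaping spike when building the candidate minimizer, which is the clean way to ensure the limiting objective is not increased, whereas the paper only speaks of replacing the diverging \emph{location} by a fixed $\vvect$ (which, read literally, would leave a spurious contribution $a_k\gamma(\vvect)$ unless $a_k=0$).
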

\begin{proof}
    Let $(\avect^l,\rvect^l)$ be a minimizing sequence for problem \eqref{eq:nonconv_eps_l1}.  If $\lambda >0$, $(\avect^l)$ is bounded due to the coercivity of the regularization term. In the case $\lambda =0$, $\Gamma^K$ is amplitude lower-bounded, which implies that $(\avect^l)$ is still bounded. As the amplitudes are bounded and $\Gamma^K$ is continuous, if a given location $\rvect_k$ diverges to infinity, its contribution $a_k \gamma (\rvect_k)$ vanishes in the limit. We can thus replace each diverging spike location of the sequence by an arbitrary location $\vvect$ distinct from the microphones to obtain a bounded minimizing sequence. Any closure point of this sequence is a solution to Problem~\eqref{eq:nonconv_eps_l1}
\end{proof}

%
Notably, the conclusion of Theorem \ref{th:existence1} still holds when considering Problem \eqref{eq:nonconv_eps_l1} with \(\lambda > 0\) and \(\varepsilon = 0\).
\begin{theorem}\label{th:existence2}
Let $\lambda>0$ and let us assume that $\kappa$ is continuous, bounded and $\kappa(0)>0$. Let $\phi$, $\mu_m$ be defined as in Theorem \ref{th:existence1}.
Then if one of the following conditions is true, Problem~\eqref{eq:nonconv_eps_l1} with $\varepsilon=0$ has at least one solution:
\begin{enumerate}[(i)]
    \item $\phi<0$ and for all $m\in\dint{1}{M}$, $\mu_m\leq \frac{\phi}{2\lambda}\Ltn{\xvect}^2$
    \item $\phi\geq 0$ and for all $m\in\dint{1}{M}$, $\mu_m\leq 0$
\end{enumerate}
\end{theorem}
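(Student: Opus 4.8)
\textbf{Proof plan for Theorem~\ref{th:existence2}.}

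The plan is to mirror the strategy used for Theorem~\ref{th:existence1}, adapting the three-lemma chain (behavior of minimizing sequences, limit-value expansion, existence criterion) to the regularized functional $T_\lambda$ with $\varepsilon=0$. First I would establish the analogue of Lemma~\ref{prop:behavior}: given a minimizing sequence $(\avect^l,\rvect^l)$ for \eqref{eq:nonconv_eps_l1}, the amplitudes $(\avect^l)$ are now bounded \emph{for free} by coercivity of the regularization term $\lambda\sum_k a_k$, with no need for amplitude lower-boundedness of $\Gamma^K$. However, unlike in \eqref{eq:nonconv}, the positions $\rvect^l_k$ are no longer confined to the bounded set $\mathscr C$; a spike may diverge to infinity. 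Since amplitudes are bounded and $\gamma(\rvect)\to 0$ as $\|\rvect\|\to\infty$, any such diverging spike contributes nothing in the limit and can be discarded (replaced by an arbitrary fixed location away from the microphones), exactly as in the proposition just proved. After this reduction, each remaining $\rvect^l_k$ either converges to a point of $\R^3\setminus E_M$ (case $(i)$) or converges to some microphone $\mic_{m_k}$ with $a^l_k\to 0$ and $a^l_k/(4\pi\|\rvect^l_k-\mic_{m_k}\|)$ bounded (case $(ii)$), the argument for the dichotomy being the same boundedness-of-the-residual-at $n=0$ computation.

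Next I would prove the analogue of Lemma~\ref{prop:limit_cost}: up to extraction and reindexing there exist $K'\le K$, a pair $(\avect,\rvect)\in(\R_+)^{K'}\times(\R^3\setminus E_M)^{K'}$, and $\widetilde{\avect}\in\R_+^M$ such that the optimal value of \eqref{eq:nonconv_eps_l1} equals
\begin{equation}
\widetilde{T}_\lambda(\avect,\rvect,\widetilde{\avect}) \coloneqq \frac{1}{2}\sum_{m=1}^M\sum_{n=0}^{N-1}\Bigl(x_{m,n}-\sum_{k=1}^{K'}a_k\gamma_{m,n}(\rvect_k)-\widetilde{a}_m\kappa(n/f_s)\Bigr)^2 + \lambda\sum_{k=1}^{K'}a_k,
\end{equation}
where the key point is that the spikes collapsing onto a microphone $\mic_m$ still contribute only the term $\widetilde a_m\kappa(n/f_s)$ to channel $m$ (their amplitudes tend to $0$, so their cross-contributions to other channels vanish), and—crucially—they contribute \emph{zero} to the regularization term in the limit since $a^l_k\to 0$. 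This last observation is what makes the regularized case favorable: a minimizing sequence cannot "hide" amplitude mass in the singularities to gain a regularization discount.

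Then comes the criterion lemma, the analogue of Lemma~\ref{prop:lemma_crit}. Fixing $(\avect,\rvect)$ and minimizing $\widetilde T_\lambda$ over $\widetilde{\bvect}\in\R^M$ channel by channel, each $\widetilde T_\lambda^m(t)=\frac12\sum_n(x_{m,n}-\sum_k a_k\gamma_{m,n}(\rvect_k)-t\kappa(n/f_s))^2$ is the same convex quadratic as before (the $\lambda$-term does not depend on $\widetilde{\bvect}$), so its unconstrained minimizer $\widetilde b_m^*$ satisfies the same first-order condition, and $\widetilde b_m^*\le 0$ iff $\mu_m\le\sum_{k=1}^{K'}\sum_n a_k\kappa(n/f_s)\gamma_{m,n}(\rvect_k)$; when this holds for all $m$ we get $\widetilde{\avect}=0$ and hence an attained minimizer. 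Finally, to convert this into the stated conditions on $\phi$ and $\mu_m$, I need a uniform lower bound on $\sum_{k=1}^{K'}a_k\sum_n\kappa(n/f_s)\gamma_{m,n}(\rvect_k)\ge\phi\sum_{k=1}^{K'}a_k$; bounding $\sum_{k=1}^{K'}a_k$ is where the $\lambda>0$ hypothesis replaces amplitude lower-boundedness. Evaluating the objective at $(\avect,\rvect)=(0,\cdot)$ gives $\tfrac12\|\xvect\|^2\ge\widetilde T_\lambda\ge\lambda\sum_k a_k^l$, so in the limit $\sum_{k=1}^{K'}a_k\le\|\xvect\|^2/(2\lambda)$; combined with $\phi<0$ this yields $\sum_{k=1}^{K'}a_k\sum_n\kappa(n/f_s)\gamma_{m,n}(\rvect_k)\ge\phi\|\xvect\|^2/(2\lambda)\ge\mu_m$ under $(i)$, while under $(ii)$, $\phi\ge 0$ makes the left side nonnegative and $\ge\mu_m$ trivially. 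Applying the criterion lemma closes the proof. The main obstacle, as above, is the careful bookkeeping in the limit-value lemma—tracking which spikes collapse to which microphone and verifying that their contribution to both the data-fidelity and regularization terms is exactly what the reduced functional prescribes—but the structural simplification from coercivity of the $\ell_1$ term makes it no harder than the unregularized case, and arguably cleaner.
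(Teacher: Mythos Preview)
Your proposal is correct and follows essentially the same route as the paper: adapt the three-lemma chain of Theorem~\ref{th:existence1}, using coercivity of $\lambda\sum_k a_k$ in place of amplitude lower-boundedness to bound the amplitudes, handle the new possibility of spikes diverging to infinity (their contribution vanishes), observe that the regularization term does not enter the $\widetilde{\bvect}$-minimization in the criterion lemma, and replace the amplitude bound $\tfrac{2}{C}\|\xvect\|$ by $\tfrac{1}{2\lambda}\|\xvect\|^2$ via $T_\lambda(0,\cdot)=\tfrac12\|\xvect\|^2\ge\inf T_\lambda\ge\lambda\sum_k a_k$. Your write-up is in fact more explicit than the paper's about why collapsing spikes contribute nothing to the $\ell_1$ term in the limit, which is exactly the point that makes the adaptation go through.
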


\begin{remark}
    If $\Gamma^K$ is amplitude lower-bounded with constant $C$, the inequality constraint in case $(i)$ can be improved to:
    \begin{equation}
        \forall m\in\dint{1}{M}, \quad \mu_m\leq \max\left(\frac{\phi}{2\lambda}\Ltn{\xvect}^2, \frac{2\phi}{C}\Ltn{\xvect}\right).
    \end{equation}
\end{remark}

\begin{proof}
    We adapt the proof of Theorem \ref{th:existence1}.
    Observe that because $T_\lambda(\avect, \rvect)\geq \lambda \sum_k a_k$ the amplitudes of a minimizing sequence $(\avect^l, \rvect^l)$ are bounded, removing the need for the amplitude lower-boundedness hypothesis in Lemma \ref{prop:behavior}. Lemma \ref{prop:behavior} can then be reproduced, with the added possibility of a spike diverging to infinity. Due to the boundedness of the amplitudes, the contribution of such a spike to the cost function vanishes in the limit. The rest of the proof is identical.

    Likewise, the proof of Lemma \ref{prop:limit_cost} is identical in the case $\lambda>0$, with a different expression of the optimal value:
    \begin{equation}
        \inf_{(\avect, \rvect)\in\Oset{}{K}}T_\lambda(\avect, \rvect)=\widetilde{T}_\lambda(\avect, \rvect,\widetilde{\avect}) \coloneqq \widetilde{T}(\avect, \rvect,\widetilde{\avect})
+\lambda\sum_{k=1}^{K'}a_k.
    \end{equation}
 
    The addition of the regularization term does not affect the argument in Lemma \ref{prop:lemma_crit}, and the proof is identical as the optimality conditions considered in \eqref{eq:opticond} are unchanged. We thus obtain the same existence criterion.

    Finally, we only need to adapt the upper bound on the amplitudes given in the proof of Theorem \ref{th:existence1} to handle the case $\phi < 0$. Using the same notations, we have here $T_\lambda(0,\rvect)=\frac{1}{2}\Ltn{\xvect}^2\geq T_\lambda(\avect, \rvect)\geq \lambda \sum_{k=1}^{K'} a_k$. The same argument as before yields the existence criterion.
\end{proof}

\subsection{A convex relaxation}\label{sect:IPdimInf}
By using the integral representation \eqref{eq:discrete_signal} we extend the definition of the observation function $\Gamma^K$ to a linear operator $\Gamma^\varepsilon$ on the space of Radon measures. The vector space of Radon measures over $\Reps$, denoted by $\mathcal M(\Reps)$, can be defined as the dual space of the space of continuous functions $\mathcal C_0(\Reps)$ that vanish at infinity. $\mathcal M(\Reps)$ is a Banach space when endowed with the total variation norm defined by :
\begin{equation}   
\forall \psi \in \mathcal M(\Reps),\quad \tvn{\psi} = \sup \left\{ \int_X{f d\psi},\; f \in \mathcal C_0(\Reps),\; \infn{f}\leq 1 \right\}.
\end{equation}
Proceeding as in Section~\ref{sec:modelFiniteD}, the solution $ p(\cdot, t)$ to the free-field wave equation \eqref{eq:wave_freefield}, with a spatial source term $\psi$ supported on \( \Reps \), is obtained by convolving \( \psi \) in space with the ``Green's function'' distribution:
\[
G_t = \frac{\delta(t - \|\rvect\|/c)}{4\pi \|\rvect\|}
\]
Then, \( p \) is further convolved in time with the filter \( \kappa \) and discretized at each sample to obtain the observation vector:
\begin{equation}\label{eq:discrete_signal2}
    x_{m,n}=\left(\kappa \ast p(\mic_m,\cdot)\right) (n/f_s) = \int_{\rvect\in\Reps}\frac{\kappa(n/{f_s}-\Ltn{\mic_m-\rvect}/c)}{4\pi\Ltn{\mic_m-\rvect}}\psi(\rvect)d\rvect,\quad  m\in\dint{1}{M},\; n\in\dint{0}{N-1}.
\end{equation}
This leads to introduce the linear operator $\Gamma^\varepsilon$:
\begin{equation}\label{eq:def_gamma}
\Gamma^\varepsilon : 
\begin{array}[t]{rcl}
\M(\Reps) & \longrightarrow & \R^{MN}\\
\psi &\longmapsto&\displaystyle \left(\int_{\rvect \in \Reps} \frac{ \kappa(n/f_s-\Ltn{\rvect-\mic_{m}}/c)}{4\pi \Ltn{\rvect-\mic_{m}}}\, d\psi(\rvect)\right)_{\substack{1\leq m\leq  M\\0\leq n\leq N-1}}
\end{array}
\end{equation}
which can be interpreted as the mapping of a given source term $\psi$ supported on $\Reps$ to the measured free-field response at each receiver location. 
Note that \eqref{eq:discrete_signal2} is well-defined whenever \( \psi \) is a Radon measure supported on \( \Reps \). Furthermore, \( \Gamma^\varepsilon \psi \) extends \eqref{eq:discrete_signal2} to a broader class of \( \psi \), specifically Radon measures supported on \( \Reps \).

The convex relaxation of problem \eqref{eq:nonconv_eps_l1} is called Beurling-LASSO or BLASSO \cite{duval2015exact} and can be written as:
\begin{equation}\label{eq:relaxed_blasso}
\tag{\mbox{$\mathscr{B}_{\lambda,\varepsilon}$}}
\boxed{
    \inf_{\psi\in\mathcal M(\Reps)}\frac{1}{2}\Ltn{\xvect-\Gamma^\varepsilon \psi}^2 + \lambda\tvn{\psi}.}
\end{equation}
\begin{remark}
    Problem \eqref{eq:nonconv_eps_l1} is indeed the restriction of problem \eqref{eq:relaxed_blasso} to linear combinations of $K$ Dirac measures, as for $(\avect, \rvect)\in \Oset{\varepsilon}{K}$, $\Gamma^K(\avect, \rvect) = \Gamma^\varepsilon(\sum_{k=1}^K a_k\delta_{\rvect_k})$ and $\tvn{\sum_{k=1}^K a_k\delta_{\rvect_k}}=\sum_{k=1}^K|a_k|=\Lon{\avect}$. Moreover, $\Gamma^\varepsilon$ is continuous, and for $\lambda > 0$ problem \eqref{eq:relaxed_blasso} admits solutions \cite{bredies2013inverse}, with at least one $MN$-sparse solution \cite{boyer2019representer} (meaning a measure composed of at most $MN$ Dirac masses). In particular, for $K\geq MN$, the optimal values for \eqref{eq:nonconv_eps_l1} and \eqref{eq:relaxed_blasso} are the same. 
\end{remark}

In \cite{duval2015exact}, Duval and Peyré introduced a set of useful tools to analyze the structure of solutions to the BLASSO problem \eqref{eq:relaxed_blasso} in the case where \(\xvect = \Gamma^\varepsilon \psi^* + \evect\), with \(\evect\) being a noise vector and \(\psi^* = \sum_{k=1}^{K} a^*_k \gamma (\rvect^*_k)\).  

In particular, for \(\rvect \in \Reps\), let \(\Gamma^\varepsilon_{\rvect}: \R^{4K} \to \R^{MN}\) be defined in matrix form as follows:
\begin{equation}
  \Gamma^\varepsilon_{\rvect} \coloneqq 
  \begin{bmatrix}
      \gamma(\rvect_1)  \;\ldots\;   \gamma(\rvect_K) & \partial_x\gamma(\rvect_1)  \;\ldots\;   \partial_x\gamma(\rvect_K )&
      \partial_y\gamma(\rvect_1)  \;\ldots\;   \partial_y\gamma(\rvect_K)&\partial_z\gamma(\rvect_1)  \;\ldots\;   \partial_z\gamma(\rvect_K )
  \end{bmatrix}
\end{equation}

In what follows, we denote by $M^\top$ (resp. $M^*$) the transpose of a matrix $M$ (resp. the adjoint of the operator $M$). Assuming $\Gamma^\varepsilon_{\rvect^*}$ has full rank, the so-called \emph{vanishing derivatives precertificate} $\eta_V$ is defined by: 
\begin{equation}\label{def:etaV}
\eta_V=(\Gamma^{\varepsilon})^* v^*,\quad \text{ where }\quad v^*=({\Gamma_{\rvect}^{\varepsilon \dagger}})^\top \begin{pmatrix}
    \operatorname{sign}(\avect^*) \\ 0_{\R^{3K}},
\end{pmatrix}
\end{equation}
where $\Gamma^\varepsilon$ is the operator defined by \eqref{eq:def_gamma}, ${\Gamma_{\rvect}^{\varepsilon \dagger}}$ denotes the pseudoinverse\footnote{In other words, ${\Gamma_{\rvect}^{\varepsilon \dagger}}$ is the unique matrix satisfying all four conditions below:
$$
\Gamma_{\rvect}^{\varepsilon}{\Gamma_{\rvect}^{\varepsilon \dagger}}\Gamma_{\rvect}^{\varepsilon}=\Gamma_{\rvect}^{\varepsilon}, \quad {\Gamma_{\rvect}^{\varepsilon \dagger}}\Gamma_{\rvect}^{\varepsilon}{\Gamma_{\rvect}^{\varepsilon \dagger}}={\Gamma_{\rvect}^{\varepsilon \dagger}}, \quad (\Gamma_{\rvect}^{\varepsilon}{\Gamma_{\rvect}^{\varepsilon \dagger}})^\top = \Gamma_{\rvect}^{\varepsilon}{\Gamma_{\rvect}^{\varepsilon \dagger}}, \quad ({\Gamma_{\rvect}^{\varepsilon \dagger}}\Gamma_{\rvect}^{\varepsilon})^\top = {\Gamma_{\rvect}^{\varepsilon \dagger}}\Gamma_{\rvect}^{\varepsilon}.
$$} of $\Gamma_{\rvect}^{\varepsilon}$.

\begin{proposition}
     The vector $\vvect^*$ defined by \eqref{def:etaV} is the unique solution to:
    \begin{equation}
        \min_{\vvect\in\R^{4K}}\Ltn{\vvect}\quad \text{ such that: } ((\Gamma^\varepsilon)^*\vvect)(\rvect^*_{k})=\operatorname{sign}(a^*_{k}),\; D((\Gamma^\varepsilon)^*\vvect)(\rvect^*_{k})= 0 \quad \forall k\in\dint{1}{K}. 
    \end{equation}
\end{proposition}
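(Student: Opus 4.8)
The statement is a standard characterization of a minimum-norm vector subject to linear interpolation constraints. Let me set up the notation: write $\eta_\vvect = (\Gamma^\varepsilon)^*\vvect$ for the dual certificate associated to a candidate $\vvect \in \R^{MN}$ (note $\vvect \in \R^{MN}$, while the constraints live in $\R^{4K}$). The constraint set is
\[
\mathcal A = \left\{\vvect \in \R^{MN}: \eta_\vvect(\rvect^*_k) = \operatorname{sign}(a^*_k),\ D\eta_\vvect(\rvect^*_k) = 0,\ k \in \dint{1}{K}\right\}.
\]
The plan is to recognize that the $4K$ scalar constraints can be written compactly as a single linear system $(\Gamma^\varepsilon_{\rvect^*})^\top \vvect = \zvect$ where $\zvect = (\operatorname{sign}(\avect^*), 0_{\R^{3K}})^\top \in \R^{4K}$, by reading off the columns of $\Gamma^\varepsilon_{\rvect^*}$: indeed $\gamma(\rvect^*_k) = $ (the vector whose inner product with $\vvect$ gives $\eta_\vvect(\rvect^*_k)$) and $\partial_\bullet\gamma(\rvect^*_k)$ gives the components of $D\eta_\vvect(\rvect^*_k)$. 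So the problem is exactly $\min \Ltn{\vvect}$ subject to $(\Gamma^\varepsilon_{\rvect^*})^\top\vvect = \zvect$.

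First I would verify that $\mathcal A$ is nonempty: since $\Gamma^\varepsilon_{\rvect^*}$ has full rank (this is the standing hypothesis before the proposition), $(\Gamma^\varepsilon_{\rvect^*})^\top$ is surjective onto $\R^{4K}$, so the affine system is consistent. Then the minimum-norm solution of a consistent linear system $A^\top \vvect = \zvect$ is the classical least-norm solution, namely $\vvect^* = (A^\top)^\dagger \zvect = A(A^\top A)^{-1}\zvect$ when $A$ has full column rank — equivalently $\vvect^* = (A^{\dagger})^\top \zvect$. Matching $A = \Gamma^\varepsilon_{\rvect^*}$ shows $\vvect^* = (\Gamma^{\varepsilon\dagger}_{\rvect^*})^\top \zvect$, which is precisely formula \eqref{def:etaV}. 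To justify that this is the unique minimizer, I would give the short Hilbert-space argument: decompose any $\vvect \in \mathcal A$ as $\vvect = \vvect_0 + \wvect$ with $\vvect_0 \in \operatorname{Range}(\Gamma^\varepsilon_{\rvect^*})$ and $\wvect \in \ker((\Gamma^\varepsilon_{\rvect^*})^\top) = \operatorname{Range}(\Gamma^\varepsilon_{\rvect^*})^\perp$; the constraint determines $\vvect_0$ uniquely (full rank), and $\Ltn{\vvect}^2 = \Ltn{\vvect_0}^2 + \Ltn{\wvect}^2$ is minimized exactly when $\wvect = 0$. Hence $\vvect^* = \vvect_0$ is the unique solution, and by construction it lies in $\operatorname{Range}(\Gamma^\varepsilon_{\rvect^*})$, which is consistent with the pseudoinverse formula.

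The only genuinely non-routine step is the bookkeeping that turns the two families of pointwise conditions on $\eta_\vvect$ and $D\eta_\vvect$ into the matrix identity $(\Gamma^\varepsilon_{\rvect^*})^\top \vvect = \zvect$ with the block structure of $\Gamma^\varepsilon_{\rvect^*}$ displayed in the excerpt; one must check that differentiation of $\eta_\vvect = (\Gamma^\varepsilon)^*\vvect$ in the spatial variable commutes with the adjoint and the finite sum defining $\Gamma^\varepsilon$, so that the columns $\partial_x\gamma(\rvect^*_k)$, etc., indeed encode $D\eta_\vvect(\rvect^*_k)$. This requires only that $\gamma$ be $C^1$ on $\Reps$ (true away from the microphones), so differentiation under the integral/sum is legitimate. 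Everything else — consistency of the system, the least-norm formula, uniqueness — is entirely standard linear algebra, so I do not anticipate any real obstacle beyond getting the indexing of the $4K$ constraints right.
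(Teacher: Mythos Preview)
Your proof is correct and follows the standard argument; the paper itself does not give a proof of this proposition (it is stated as a known fact, implicitly drawn from \cite{duval2015exact}), so there is nothing to compare against. You also correctly spotted and worked around the typo in the statement: the minimization is over $\vvect\in\R^{MN}$, not $\R^{4K}$, since $(\Gamma^\varepsilon)^*$ acts on $\R^{MN}$ and $\vvect^*$ as defined in \eqref{def:etaV} lies in $\R^{MN}$.
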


\begin{remark}
The precertificate $\eta_V$ interpolates the sign of the true measure at each spike's location, while ensuring a zero derivative at these points.
In the case we consider, the adjoint $(\Gamma^\varepsilon)^*$ takes the following expression: an operator on $\R^{MN}$ that takes its values in $\mathcal C^0(X)$:
\begin{equation}
     (\Gamma^\varepsilon)^*:\begin{array}[t]{ccc}
        \R^{MN} & \longrightarrow & \mathcal C^0(\Reps) \\
        \vvect & \mapsto & \rvect\mapsto \sum_{i=1}^{MN} v_i\gamma_i(\rvect).
    \end{array}
\end{equation}
\end{remark}

A crucial property of the precertificate is its non-degeneracy:
\begin{definition}
    $\eta_V$ is said to be non-degenerate if it verifies:
\begin{enumerate}[(i)]
  \item $\det D^2\eta_V(\rvect_{k}^*)\neq 0$   $\forall k \in\dint{1}{K}$
  \item $|\eta_V(\rvect)| < 1$ if $\rvect \notin \{\rvect^*_{1},\ldots {\rvect^*_{K}}\}$.
\end{enumerate}
\end{definition}

Assuming the kernels $\gamma_{m,n}$ are $C^2$, and that the derivatives up to order 2 vanish at infinity \cite{duval2015exact}, we then have the following theorem:
\begin{theorem}[\cite{duval2015exact}, Theorem 2] \label{th:stable_recovery}
      Assume that $\Gamma^\varepsilon_{\rvect^*}$ has full rank and  $\eta_V$ is non-degenerate.
      Then, there exist two constants $\alpha$, $\lambda_0>0$ such that for all $\lambda$, $\evect$ verifying $0<\lambda\leq \lambda_0$ and $\frac{\Ltn{\evect}}{\lambda}\leq \alpha$, there exists a unique solution $\widetilde\psi$ to \eqref{eq:relaxed_blasso}. Moreover, $\widetilde\psi$ takes the form $\widetilde\psi=\sum_{k=1}^K\widetilde a_k\delta_{\widetilde \rvect_k}$, and if we take $\lambda = \frac{1}{\alpha}\Ltn{\evect}$ we have:
    \begin{equation}
        \infn{\avect^* - \widetilde\avect} = \operatorname{O}(\Ltn{\evect}) \text{ and } \infn{\rvect^* - \widetilde\rvect} = \operatorname{O}(\evect).
    \end{equation} 
\end{theorem}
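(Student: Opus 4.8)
The plan is to recognize that Theorem~\ref{th:stable_recovery} is, by construction, a direct application of a known result (Theorem~2 in \cite{duval2015exact}), so the work consists of verifying that our setting fits into that abstract framework. First I would set up the correspondence: our measurement operator is $\Gamma^\varepsilon:\M(\Reps)\to\R^{MN}$ as defined in \eqref{eq:def_gamma}, with kernels $\gamma_{m,n}(\rvect) = \kappa(n/f_s-\|\rvect-\mic_m\|/c)/(4\pi\|\rvect-\mic_m\|)$; the ground-truth measure is $\psi^* = \sum_{k=1}^K a_k^*\delta_{\rvect_k^*}$ with $\rvect_k^*\in\Reps$ distinct; and the data is $\xvect = \Gamma^\varepsilon\psi^* + \evect$. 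The abstract theorem requires (a) a separable topological measurement space, which $\R^{MN}$ trivially is; (b) that the forward map $\rvect\mapsto\gamma(\rvect) = (\gamma_{m,n}(\rvect))_{m,n}$ is $C^2$ with all derivatives up to order $2$ continuous and vanishing at infinity; (c) that $\Gamma^\varepsilon_{\rvect^*}$ has full rank; and (d) that the vanishing-derivatives precertificate $\eta_V$ from \eqref{def:etaV} is non-degenerate. Hypotheses (c) and (d) are stated as assumptions of our theorem, and (a) is immediate, so the crux is (b).

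For step (b), I would argue as follows. Fix a microphone $\mic_m$ and restrict attention to $\rvect\in\Reps$, so that $\|\rvect-\mic_m\|\geq\varepsilon>0$. The map $\rvect\mapsto\|\rvect-\mic_m\|$ is real-analytic on $\R^3\setminus\{\mic_m\}$, hence $C^\infty$ on $\Reps$; composing with $t\mapsto 1/(4\pi t)$ (smooth on $[\varepsilon,\infty)$) and with the filter $\kappa$ — which we must assume is $C^2$, true for the Gaussian filter $\kappa^\sigma$ and for the sinc filter $\kappa^{\text{lp}}$ after its removable singularity at $0$ is filled in — shows each $\gamma_{m,n}$ is $C^2$ on $\Reps$. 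For the decay at infinity: as $\|\rvect\|\to\infty$ we have $\|\rvect-\mic_m\|\to\infty$, so the prefactor $1/(4\pi\|\rvect-\mic_m\|)\to0$; under \eqref{hyp:filter} the factor $\kappa(\cdot)$ stays bounded, giving $\gamma_{m,n}(\rvect)\to0$. For the derivatives, the chain rule produces terms that are products of (derivatives of $\kappa$, bounded if we assume $\kappa$ has bounded derivatives up to order $2$ — again true for both model filters) with negative powers of $\|\rvect-\mic_m\|$ and bounded geometric factors like $(\rvect-\mic_m)/\|\rvect-\mic_m\|$; every such term carries at least one factor $\|\rvect-\mic_m\|^{-1}$, so $D\gamma_{m,n}$ and $D^2\gamma_{m,n}$ also vanish at infinity. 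I would spell this out just enough to make the decay of the second derivatives transparent, since that is the one slightly fiddly computation.

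Having checked that all hypotheses of \cite[Theorem~2]{duval2015exact} hold, I would then simply invoke that theorem: there exist $\alpha,\lambda_0>0$ such that for $0<\lambda\leq\lambda_0$ and $\|\evect\|_2/\lambda\leq\alpha$, problem \eqref{eq:relaxed_blasso} has a unique solution $\widetilde\psi$, it is a sum of exactly $K$ spikes $\widetilde\psi=\sum_{k=1}^K\widetilde a_k\delta_{\widetilde\rvect_k}$, and with the calibrated choice $\lambda = \|\evect\|_2/\alpha$ one gets $\|\avect^*-\widetilde\avect\|_\infty = O(\|\evect\|_2)$ and $\|\rvect^*-\widetilde\rvect\|_\infty = O(\|\evect\|_2)$, after matching indices. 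The only genuine obstacle is a bookkeeping one: making sure the definition of $\eta_V$ and the non-degeneracy conditions we wrote down coincide exactly with those in \cite{duval2015exact} (in particular that our matrix $\Gamma^\varepsilon_{\rvect^*}$ and the stacking order of values and first derivatives match their $\Gamma_{x_0}$), so that no hidden extra hypothesis is being silently used; I would add a sentence confirming this alignment. There is no hard analysis here — the theorem is a transcription of an off-the-shelf result once the regularity of the acoustic kernels on $\Reps$ is established.
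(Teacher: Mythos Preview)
Your proposal is correct and matches the paper's treatment: the paper does not prove this theorem either, but simply cites \cite[Theorem~2]{duval2015exact}, noting beforehand that the kernels $\gamma_{m,n}$ are $C^2$ with derivatives vanishing at infinity, and afterwards sketching in one sentence the implicit-function-theorem idea behind the cited proof. Your verification of the regularity hypotheses on $\Reps$ is in fact more detailed than what the paper supplies.
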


In other words, if the noise level $\Ltn{\evect}$ is low enough and $\lambda$ is chosen accordingly, there exists a solution to \eqref{eq:relaxed_blasso} in the form of a finite sparse measure that contains as many spikes as the true measure, and this solution converges to the true measure as $\lambda$ and $\Ltn{\evect}$ vanish to zero. \ts{Furthermore, the complete version of the theorem in \cite{duval2015exact} establishes the regularity of the solution with respect to the parameters $(\lambda, \evect)$.}
The idea behind the proof of this theorem and the construction of the precertificate $\eta_V$ is the following: one can use the implicit function theorem to prove the existence of a discrete measure $\psi_{\lambda,\evect}$, composed of the correct number of spikes, that interpolates the sign of the true measure at the source locations with vanishing derivatives. The locations and amplitudes are a function of the noise level and regularization parameter, and it can be shown that $\eta_\lambda\coloneqq\frac{1}{\lambda}(\Gamma^\varepsilon)^*(\xvect_0-\Gamma^\varepsilon\psi_{\lambda,\evect})$ converges to $\eta_V$ when the noise level and $\lambda$ vanish to zero. The assumptions of the theorem then ensure that $\infn{\eta_\lambda}\leq 1$ in the limit, hence the corresponding measure verifies the optimality conditions.

Applying Theorem \ref{th:stable_recovery} is difficult in our case, as the operator $\Gamma^\varepsilon$ is 3-dimensional and depends on a complex geometric relation between the locations of the sources and the positions of the microphones.
However, it is possible to compute numerically the vanishing derivatives pre-certificates $\eta_V$ for a given target measure $\psi^*$ to obtain some insights on the operator's behavior\ts{, and the regularity assumptions on $\eta_V$ are always verified in our case for $\varepsilon > 0$.}  
\begin{figure}[!ht]
    \centering
    \includegraphics[width=\linewidth]{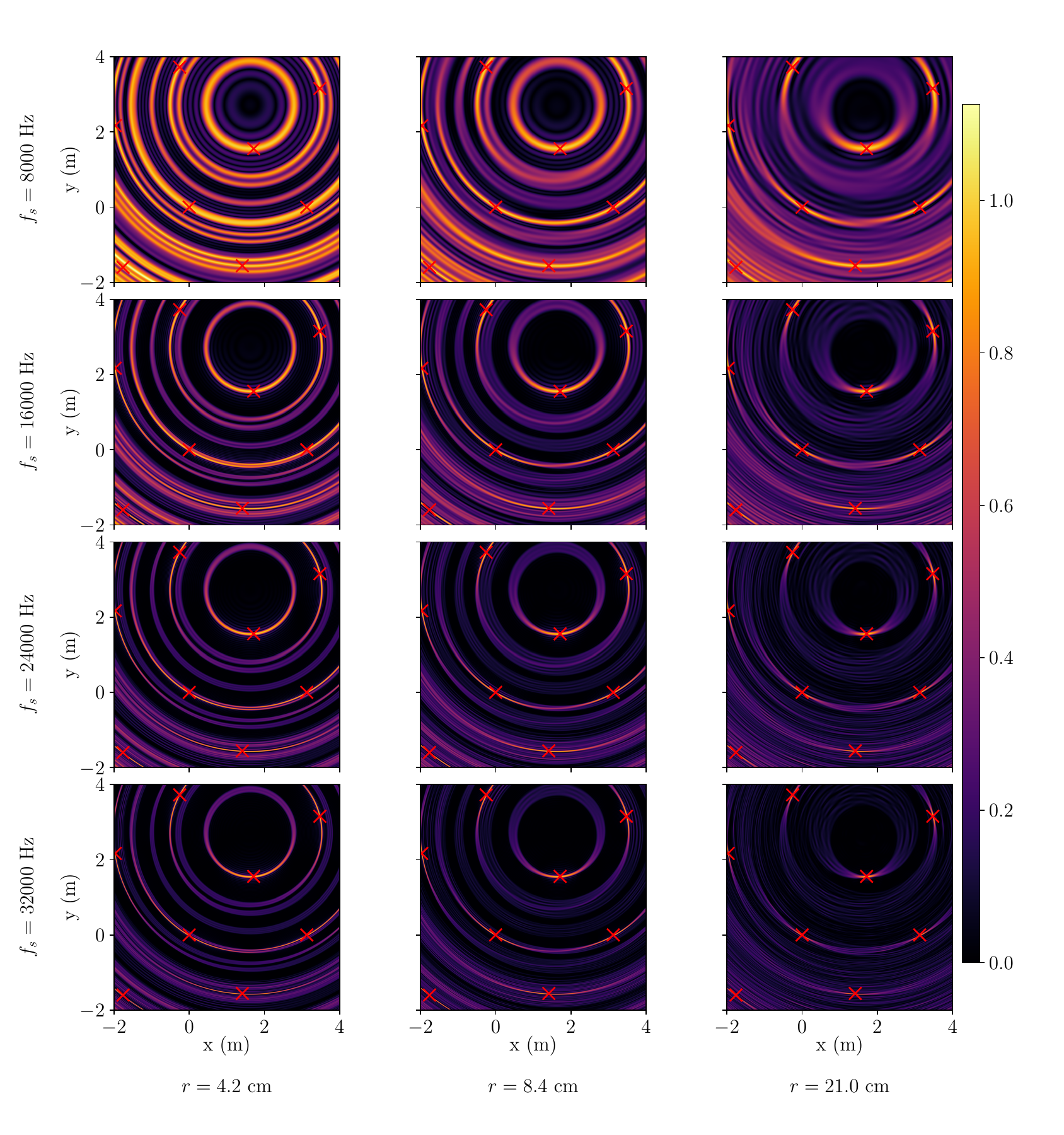}
    \caption{2D plot of the absolute value of the vanishing derivative pre-certificate $\eta_V$ on a section plane parallel to a wall of a room for different sampling frequencies $f_s$ and microphone array radius $r$. The locations of image sources that belong to the plane are marked by red crosses.}
    \label{fig:certif_plot}
\end{figure}
Fig. \ref{fig:certif_plot} represents the values taken by $\eta_V$ on a portion of a plane parallel to a wall that contains several image sources, for varying sampling frequencies. The microphone array's geometry is spherical here, and we proceed to increase the radius of the array, as the array size has an impact on the ability to geometrically locate sources (see Section \ref{sect:IS_inv_exp} for further details on the experimental setup).
Note that the mass of $\eta_V$ is concentrated on spheres that are centered around each microphone, with radii given by the times of arrival of each source to the microphone. Due to the application of the low-pass filter, each sphere is slightly smeared around its true radius, and the location of an image source is given at the intersection of every of its corresponding spheres. As the sampling frequency and the array's radius increase, the total mass of $\eta_V$ becomes more tightly contained around each sphere, ensuring a sharper distribution at the intersection. 
Recall that stable support recovery is guaranteed by Theorem \ref{th:stable_recovery} under an assumption of non-degeneracy of $\eta_V$, \emph{i.e.} $\eta_V(\rvect) < 1$ if $\rvect$ is not a source location. While Fig.~\ref{fig:certif_plot} shows that $\eta_V$ is degenerate at $8~$kHz and for the smallest array radius in this case, it seems to be non-degenerate for greater radii and sampling frequencies. More generally, numerical experiments (see in particular Fig.~\ref{fig:certif_plot}) indicate that, for a fixed room and with measurements from a spherical microphone array, $\eta_V$ is non-degenerate when the sampling frequency and array radius are sufficiently large, and thus that Theorem~\ref{th:stable_recovery} applies.
\subsection{Numerical algorithm}\label{sect:numAlgo}

We implement\footnote{The implementation is written in Python and is publicly available at  \urlstyle{tt}\url{https://github.com/Sprunckt/acoustic-sfw}.} and adapt the Sliding Frank-Wolfe type algorithm introduced in \cite{denoyelle} and initially applied to microscopy in order to solve Problem \eqref{eq:relaxed_blasso}. Let $\psi^{(i)}=\sum_{k=1}^{K^{(i)}}a_k^{(i)}\delta_{\rvect^{(i)}_k}$ be the reconstructed measure at iteration $i$. The reconstruction algorithm consists of two main steps in each iteration: 
\begin{itemize}
    \item[(i)] A new source is located by maximizing the numerical certificate: 
    \begin{equation}\label{eq:def_eta}
\eta^{(i)} : 
\begin{array}[t]{rcl}
\R^3 & \longrightarrow & \R\\
\rvect &\longmapsto& \left((\Gamma^\varepsilon)^*\textbf{res}^{(i)}\right)(\rvect) = \sum_{m,n}\textbf{res}^{(i)}_{m,n}\gamma_{m,n}(\rvect)
\end{array}
\end{equation}
where $\textbf{res}^{(i)} \coloneqq \xvect^{(i)}-\Gamma \psi^{(i)}$ is the residual at iteration $i$.
A stopping criterion can be inferred from the optimality conditions of the BLASSO problem \cite{denoyelle}: $|\eta^{(i)}(\rvect^{(i)}_*)|\leq \lambda$ where $\rvect^{(i)}_*$ 
is the new candidate location. If this criterion is not met, $\rvect^{(i)}_*$ is added to the list of already reconstructed source positions to form $\rvect^{(i+1)}$. 
\item[(ii)] The amplitudes are then updated by solving a non-negative convex LASSO problem:
\begin{equation} 
\avect^{(i+1)} = \underset{\avect\in\R_+^{K^{(i)}+1}}{\operatorname{argmin}}T_\lambda(\avect, \rvect^{(i+1)}). 
\end{equation}
\end{itemize}

The full algorithm is defined in Alg.~\ref{alg:asfw}, and the procedure is explained in detail below.
Step 1 is encompassed by the lines \ref{alg_step:mark1}-\ref{alg_step:mark2} of Alg.~\ref{alg:asfw}. The maximization of $\eta^{(i)}$ is achieved numerically by applying a parallel implementation of the BFGS algorithm \cite{gerber2020optim} to solve the optimization problem. A crucial issue is to provide an accurate guess for initializing BFGS. As the precision of the numerical approximation of the operator increases with the sampling frequency and the size of the microphone array, this becomes increasingly challenging, as illustrated in the precertificate plots of Fig. \ref{fig:certif_plot}.
Due to the spatial extent of the 3D optimization domain, evaluating $\eta^{(i)}$ on a global fine grid would require millions of function evaluations per iteration and is computationally intractable. We consider instead an efficient heuristic to initialize BFGS. As described previously, a true source should be located at the intersection of the spheres centered around each microphone with radii given by the corresponding times of arrival of the source to each microphone.
In order to approximate these times of arrival, we apply a moving average over $3$ samples to the squared residual signal of each microphone and extract the sample with maximal value. We then consider the $8$ microphones with the highest values and build uniform grids with a mean angular spacing of $5^\circ$ on the corresponding spheres. We also mesh the surrounding spheres with radii $\pm5$~cm to obtain a fine grid of approximately $40000$ points. The grid point maximizing $\eta^{(i)}$ is picked as the initial position for off-the-grid optimization.
The optimization problem of Step (ii) is solved at the line \ref{alg_step:mark3} using the Scikit-learn library \cite{pedregosa2011scikit}. \ts{This particular implementation employs a coordinate descent method to solve the LASSO problem}. 

We stop the algorithm when the amplitude of the last estimated source is below a threshold $\alpha_{\min}=0.01$ or if the criterion described in Step 1 is met. 
In order to facilitate the resolution and accelerate the execution, we begin by running the algorithm on a reduced observation vector constructed by limiting the time frame of the signals, \emph{i.e.} only considering the first $j_1$ samples with $j_1<N-1$. When a stopping criterion is reached at line \ref{alg_step:stop1} or \ref{alg_step:stop2}, we extend the RIR if possible, \emph{i.e.} if the time signals have not yet reached $T_{\max}$.
We also increase the number of samples when beginning the iteration at line \ref{alg_step:ext_rir} if $20$ iterations have been effected since the last extension, or if the norm of the residual has sufficiently decreased since the last extension (typically a $70~\%$ decrease from the norm computed at the last extension). The indices $j_1<\ldots <j_L=N-1$ are chosen in order to have a linear progression of the energy, \emph{i.e.} the norm of the vectors $(x_{m,n})_{j_l\leq n <j_{l+1}}^{1\leq m \leq M}$ are roughly constant. The first separating indices $j_l$ are well spaced, while the last indices are more clustered together, as the number of reflections arriving at each microphone increases rapidly.
This extension procedure has the effect of focusing the resolution at first on the closest sources, for which the time of arrivals are usually well separated in the signals. These low-order sources are also the most valuable, as for instance the whole geometric information on the configuration of a cuboid room is embedded in the locations of the original source and the first order image sources.

 Sliding-Frank-Wolfe \cite{denoyelle} introduces a so-called \say{sliding} step at each iteration. The idea is to perform a local descent on both the locations and amplitudes at the end of each iteration $i$ to optimize $T_\lambda(\avect, \rvect)$, using the currently reconstructed measure as initialization. Although this step greatly increases the accuracy of the reconstruction and brings convergence guarantees, the algorithmic complexity explodes for certain rooms in which the number of image sources can reach over a thousand. We thus proceed as in \cite{benard2022fast} and apply the sliding step only once after the very last step. We also delete low amplitude sources before and after optimization, as described in lines \ref{alg_step:end_lines1}-\ref{alg_step:end_lines2}. 
 
 \ts{Note that an additional projection step of the recovered source locations on the set $\Reps$ could be considered in order to handle the operator's singularities. In practice, projecting was not necessary for our experiments when using this initialization strategy, as the considered sources are sufficiently far from the microphones.}
\begin{algorithm}[thp] 
    \caption{Adapted Frank-Wolfe}\label{alg:asfw}
    \begin{algorithmic}[1]
    \REQUIRE{Observation vector $\xvect$, cutting indices $\jvect=(j_1,\ldots,j_L)$}
    \ENSURE{Estimated image-source amplitudes and locations $(\avect_\text{fin}, \rvect_{\text{fin}})$}
    \STATE $\psi^{(0)} \gets 0$
    \STATE $\xvect^{(0)} \gets (x_{m,n})^{1\leq m\leq M}_{1\leq n\leq j_1}$
    \WHILE{$i < i_{\max}$}
    \IF{$\Ltn{\textbf{res}^{(i)}}$ is sufficiently reduced or $i_\text{ext}$ iterations have elapsed since last extension}
    \STATE Extend $\xvect^{(i)}$ if possible \label{alg_step:ext_rir}
    \ENDIF
    \STATE Create an initialization grid $ \mathcal G$ \label{alg_step:mark1}
    \STATE Get $\rvect_{\text{ini}} =\operatorname{argmax}_{\rvect \in \mathcal G}\eta^{(i)}(\rvect)$
    \STATE Get $\rvect_*^{(i)}$ by applying BFGS to $-\eta^{(i)}$ with initial guess $\rvect_{\text{ini}}$\label{alg_step:mark2}
    \IF{$\eta^{(i)}(\rvect_*^{(i)})\leq \lambda$} \label{alg_step:stop1}
    \IF{$\xvect^{(i)}$ can be extended}
    \STATE Extend $\xvect^{(i)}$ and go to next iteration 
    \ELSE
     \STATE Exit the loop 
    \ENDIF
    \ENDIF
    \STATE Get $\rvect^{(i+1)} =\rvect^{(i)}\cup \{\rvect_*^{(i)}\}$
    \STATE Get $\avect^{(i+1)}$ by solving the LASSO problem $\underset{\avect\in\R_+^{K^{(i+1)}}}{\operatorname{min}}T_\lambda(\avect, \rvect^{(i+1)})$ \label{alg_step:mark3} 
    \
    \IF{$a^{(i+1)}_{K^{(i+1)}}<0.01$}  \label{alg_step:stop2}   
    \IF{$\xvect^{(i)}$ can be extended}
    \STATE Extend $\xvect^{(i)}$ and go to next iteration 
    \ELSE
     \STATE Exit the loop 
    \ENDIF
    \ENDIF
    \STATE Delete spikes from $(\avect^{(i+1)}, \rvect^{(i+1)})$ that have amplitudes below $0.01$
    \STATE $i \gets i + 1$
    \ENDWHILE   
    \STATE Delete spikes from $(\avect^{(i)}, \rvect^{(i)})$ that have amplitudes below $0.01$ \label{alg_step:end_lines1}
    \STATE Get $(\avect_\text{fin}, \rvect_{\text{fin}})$ by applying BFGS to $T_\lambda$ with initial guess $(\avect^{(i)}, \rvect^{(i)})$
    \STATE Delete spikes from $(\avect_\text{fin}, \rvect_{\text{fin}})$ that have amplitudes below $0.01$ \label{alg_step:end_lines2}

    \end{algorithmic}
    \end{algorithm}

\section{Numerical experiments}
\label{sect:IS_inv_exp}

We present here some numerical results obtained by applying the algorithm described in the last section.

\subsection{Setup of the simulations}

The experimental setup is the same as in \cite{sprunck2022gridless, sprunck2024fullreversing}: we consider a spherical array of 32 microphones based on the em32 Eigenmike\textsuperscript{\textregistered} (radius $r$=4.2~cm). We generate
200 random cuboid rooms, in which we randomly place the microphone array and sound source, enforcing a minimal separation of $1~$m between the array's center and the source. We also constrain the location of the array's center to be located at least $25~$cm from each wall in order to ensure that every microphone remains in the room. The room lengths and widths in meters are picked uniformly at random in $[2,10]$, while the heights are taken in $[2,5]$. We associate each wall with an absorption coefficient uniformly drawn at random in $[0.01,0.3]$. 
The microphone array is randomly rotated, and a multi-channel discretized room impulse response is generated by applying the operator described in equation \eqref{eq:def_gamma} to the measure composed of the image sources up to order $20$. 
This amounts to truncating the sum in Proposition \ref{prop:IS} to encompass only the source and the image sources that model reflections of order lower or equal to 20.
We set $\kappa= \kappa^\text{lp}$ as defined in \eqref{eq:lowpass_filter} for all experiments.
For each scenario, we choose $N$ as to get signals of duration $T_{\max} = \frac{N-1}{f_s} =50~$ms. Although we use $11521$ image sources to simulate the measurements, only a fraction of these sources has a noticeable impact on the first $50~$ms of each signal. Note that we will only consider the target sources that are in the set $\mathscr{C}$ (as defined in Section \ref{sect:model}) in our metrics. In other words, for evaluation we only look at the target image sources that are in range for every microphone and discard the others. The number of image sources considered in the metrics then ranges from less than $100$ to over $1500$.
$\lambda$ is set to $3.10^{-5}$ in all experiments based on a preliminar study of its impact on reconstruction accuracy. Unless specified otherwise, we consider noiseless simulations, \emph{i.e} a null noise vector $\evect$. For noisy simulations, $\evect$ has a Gaussian distribution $\mathcal N(0,\sigma^2)$, where The standard deviation of the noise $\sigma$ is calculated in the following way relatively to the Peak Signal to Noise Ratio (PSNR, expressed in dB):
\begin{equation}
    \sigma =\max_{m,n} \left| x^*_{m,n}.10^{-\operatorname{PSNR}/20} \right|.
\end{equation}

We then proceed to run the algorithm on every resulting measurement vector and we evaluate the accuracy of the reconstruction.

\subsection{Evaluation metrics}

\subsubsection{Error metrics}

Let $\rvect$ be a target source location, and $\hat{\rvect}$ the estimated source location.
We define the Angular Error (AE) as the angle between the unitary vectors defined by the target and estimated source locations:
\begin{equation}
    \text{AE}(\rvect, \hat{\rvect}) = \arccos\left(\frac{\rvect\cdot\hat{\rvect}}{\Ltn{\rvect}\Ltn{\hat{\rvect}}}\right).
\end{equation}

The Radial Error (RE) is the absolute difference between the norms of the target and estimated source locations:
\begin{equation}
    \text{RE}(\rvect, \hat{\rvect}) = \left|\Ltn{\rvect}-\Ltn{\hat{\rvect}}\right|.
\end{equation}
We will also consider the Euclidean Error (EE):
\begin{equation}
    \text{EE}(\rvect, \hat{\rvect}) = \Ltn{\rvect-\hat{\rvect}}.
\end{equation}

\subsubsection{Recall and precision}
We set radial and angular thresholds at $1~$cm and $2^\circ$ respectively, and we proceed to compute the recall on the recovered sources, that is the proportion of target image sources that were approximated with an error below the thresholds. We then compute the mean radial, angular and Euclidean errors on the sources that are considered as recovered, as well as the mean error on the corresponding amplitudes. We also consider the precision, \emph{i.e.} the proportion of sources in the reconstructed measures that are counted as truly recovered according to the error thresholds.

\subsubsection{Numerical results}

\begin{figure}[t]
    \centering
    \subfigure[Observed and reconstructed signals for two of the microphones]{
    \includegraphics[width=0.5\linewidth]{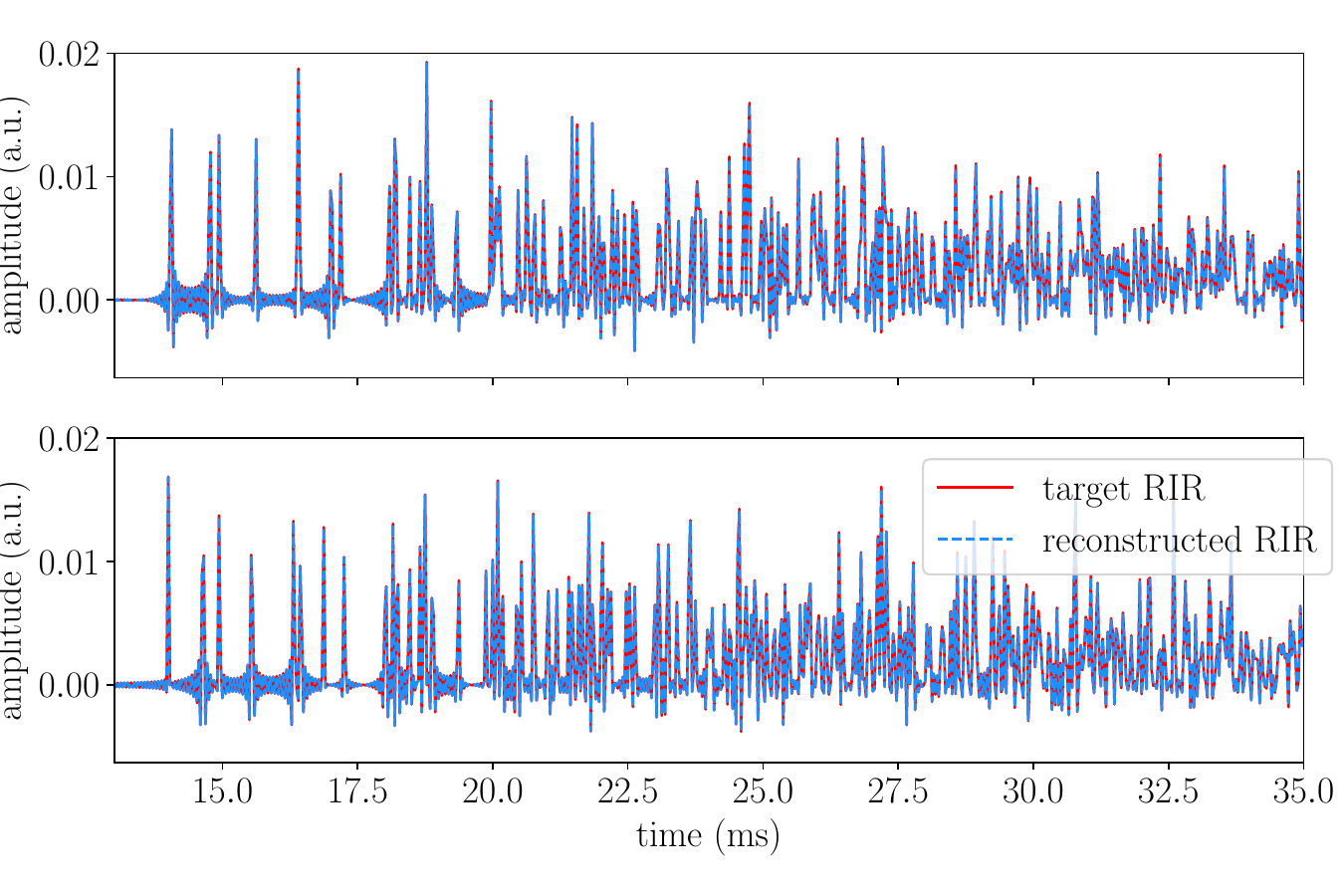}
        \label{fig:reconstr_2d}
    }
    \subfigure[3D plot of the room with its associated true and reconstructed sources]{
    \includegraphics[width=0.43\linewidth]{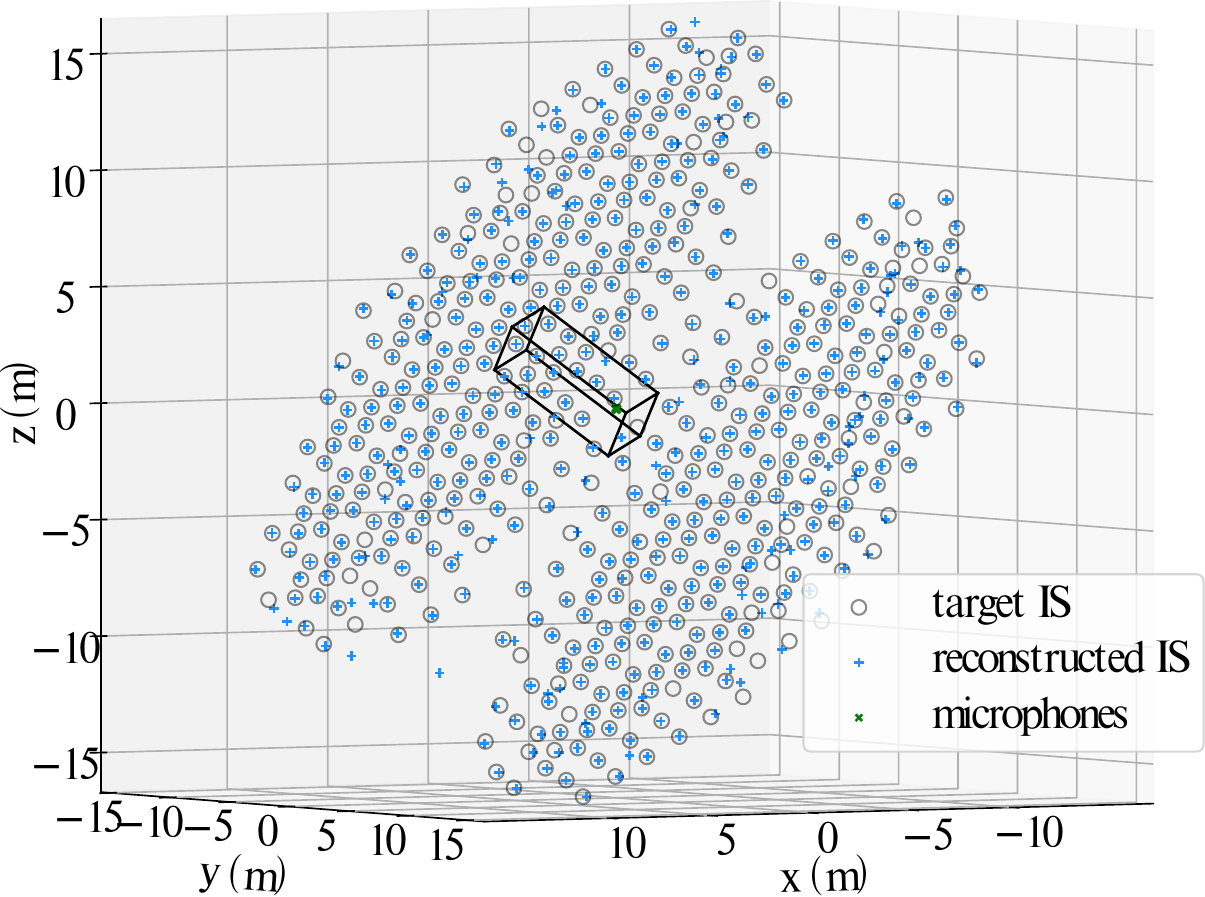}
        \label{fig:reconstr_3d}
    }
    \caption{Reconstruction results for a room of dimensions $6.45\times 2.51\times 2.35$ in meters, resulting in 530 target image sources.  The recall is $93\%$ for radial and angular thresholds of $1~$cm and $2^\circ$, and the mean euclidean error for the recovered sources is $6~$cm. The sampling frequency and microphone array radius are respectively $32~$kHz and $4.2~$cm.}
    \label{fig:reconstr_ex}
\end{figure}

\begin{figure}
    \centering
    \includegraphics[width=0.85\linewidth]{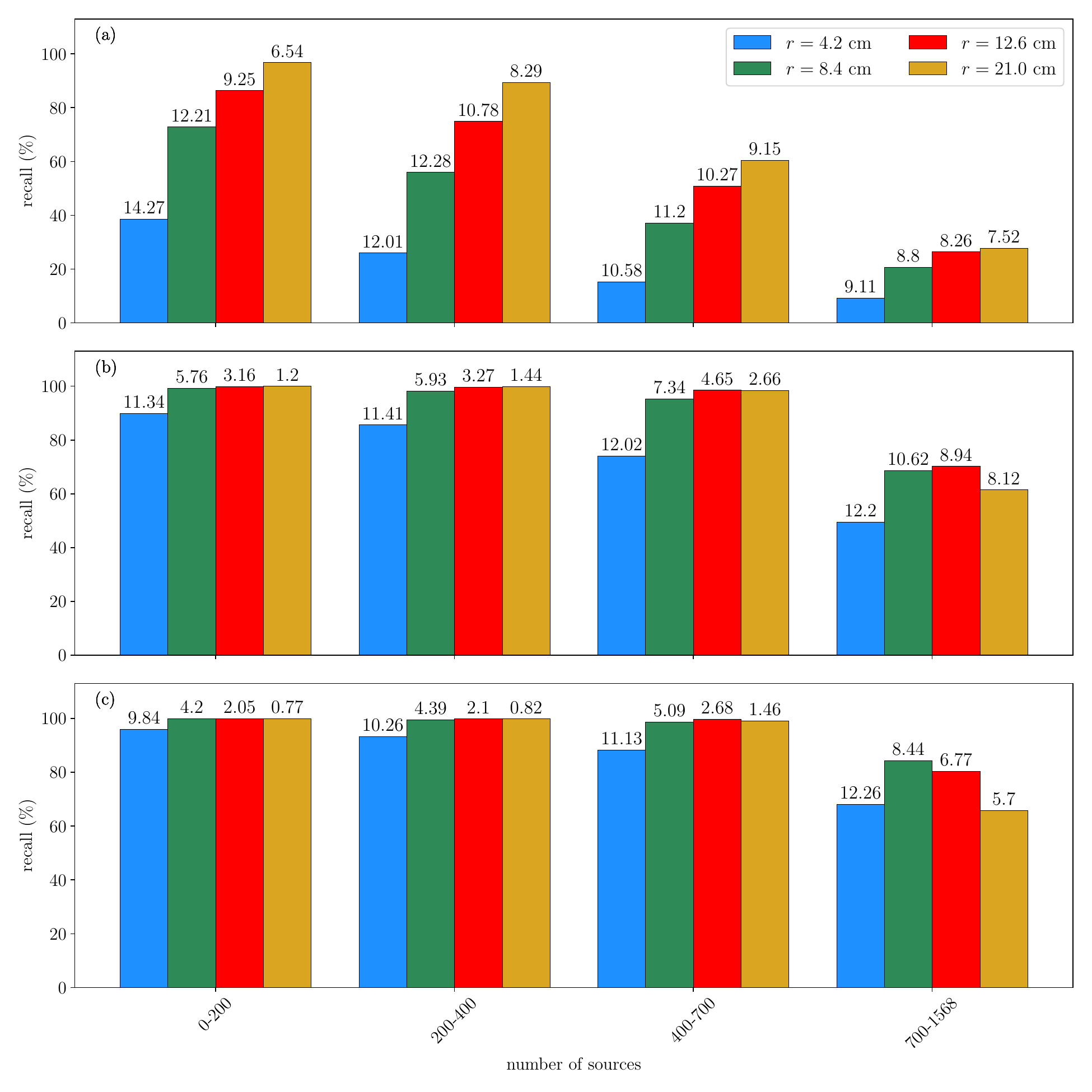}
    \caption{Recall for varying microphone array radii $r$ for $f_s$ taking the values $8~$kHz (a),  $24~$kHz (b), $32~$kHz (c). The room dataset is segmented in four subsets according to the number of target image sources. The mean Euclidean error for the recovered sources is displayed in cm above each bar.}
    \label{fig:recall_grid}
\end{figure}

    \begin{table}[b]
    \centering
         \setlength{\tabcolsep}{4.pt}
    \begin{tabular}{c||c|c|c|c|c|c} 
    \textbf{\# of IS} &
    \textbf{R(\%)} & \textbf{P(\%)} &
    $\overline{\textbf{RE}}$\textbf{(mm)} & $\overline{\textbf{AE}}$\textbf{(°)} & $\overline{\textbf{EE}}$\textbf{(mm)} & $\overline{\textbf{AmE}}$ \\\hline 
0-200     & 89.9 & 80.5 & 0.043 & 0.456 & 113  & 0.034\\
200-400   & 85.6 & 78.6 & 0.062 & 0.454 & 114  & 0.0256\\
400-700    & 74.1 & 67.7 & 0.097  & 0.488 & 120  & 0.022\\
700-1568   & 49.5 & 43.9 & 0.166  & 0.544 & 122 & 0.022\\
    \end{tabular}   
    \caption{Recall (R), precision (P) and mean radial ($\overline{\textrm{RE}}$), angular ($\overline{\textrm{AE}}$), Euclidean ($\overline{\textrm{EE}}$) and amplitude ($\overline{\textrm{AmE}}$) errors among recovered sources as a function of the number of sources, with $r=4.2~$cm and $f_s=24~$kHz.}
    \label{tab:metrics1}
        \end{table}%
\begin{table}\setlength{\tabcolsep}{2.pt}
   \centering
 \begin{tabular}{c||c|c|c|c||c|c|c|c}
 \multicolumn{1}{c||}{}&
    \multicolumn{4}{c||}{\textbf{Noiseless}} & \multicolumn{4}{c}{\textbf{30 dB PSNR}} \\\hline
    \textbf{IS order} & \textbf{R(\%)} &$\overline{\textbf{RE}}$\textbf{(mm)} & $\overline{\textbf{AE}}$\textbf{(°)} & $\overline{\textbf{EE}}$\textbf{(mm)} &  \textbf{R(\%)} &$\overline{\textbf{RE}}$\textbf{(mm)} & $\overline{\textbf{AE}}$\textbf{(°)}& $\overline{\textbf{EE}}$\textbf{(mm)} \\\hline 
    Source & 100 &0.00309&0.0163 &0.996  & 
    100 & 0.0396&0.149 & 8.23 \\
    Order 1 & 99.4 &0.00717&0.0820& 11.7  & 
    98.8 &0.0875&0.346& 37.8 \\
    Order 2 & 98.1 &0.0120&0.151& 27.0  & 
    96.5 &0.131&0.513& 74.2 \\
    Order 3 & 96.0 &0.0207&0.220&44.7  & 
    91.7 &0.175&0.662&112 \\
\end{tabular}
\caption{Recall (R), 
and mean radial ($\overline{\textrm{RE}}$), angular ($\overline{\textrm{AE}}$) and Euclidean ($\overline{\textrm{EE}}$) errors among recovered sources as a function of the image-source order, with $r=4.2~$cm and $f_s=24~$kHz.}
\label{tab:metrics2}
\end{table}

Fig. \ref{fig:reconstr_ex} presents $2$ of the $32$ target and reconstructed time signals for a particular test room, as well as the 3D locations of the sources. 
 Fig. \ref{fig:recall_grid} presents the influence of the microphone array radius $r$ and sampling frequency $f_s$ on the recall. Each bar's height represents the recall, and the corresponding mean Euclidean error is displayed at the top of each bar.  Note that we segmented the room database according to the number of target image-sources, as the number of image-sources varies greatly depending on the size and configuration of the room. In particular, for a limited number of small rooms the number of image sources explodes, increasing the reconstruction's complexity as the echoes become harder to separate in time. We observe that the performance of the algorithm improves as the sampling frequency or the array radius increases, which is expected after the previous observations on the behavior of the certificates (see Fig. \ref{fig:certif_plot}). 
 For the best parameters ($f_s=32~$kHz and $r=21~$cm) we get over $99~$\% recall for the rooms that generate less than $700$ image sources, with a mean Euclidean error under $1.5~$cm. For these rooms, the precision, \emph{i.e.} the proportion of sources in the reconstructed measures that are counted as truly recovered is over $90~$\%. Note that if two sources are reconstructed close to a same target source, only the closest one is counted as a true positive. Table \ref{tab:metrics1} presents the recall (R), precision (P) and mean radial ($\overline{\textrm{RE}}$), angular ($\overline{\textrm{AE}}$), Euclidean ($\overline{\textrm{EE}}$) and amplitude ($\overline{\textrm{AmE}}$) errors amongst the recovered sources for every subset of the room database sources, with $f_s$=24~kHz, $r$=4.2~cm and no noise. The mean Euclidean errors are of the order of a few centimeters. 
 Comparatively, the distance of the sources to each microphone ranges from 1 to over 15 meters, and the error increases with distance due to the compact spherical geometry of the microphone array. Table \ref{tab:metrics2} presents the recall and mean Euclidean error as a function of image-source order when considering the whole room dataset at once, both for the noiseless case and at $30~$dB PSNR. In particular we get a satisfactory $99.4~\%$ recall rate for the first order image sources in the noiseless case, with a mean localization error of $1.17~$cm. Note that if we increase the sampling frequency to $32~$kHz and the array radius to $21~$cm, every first order source is recovered, with a mean Euclidean error of $0.773~$mm (not displayed in the table).

\begin{figure}[t]
    \centering
    \includegraphics[width=0.88\linewidth]{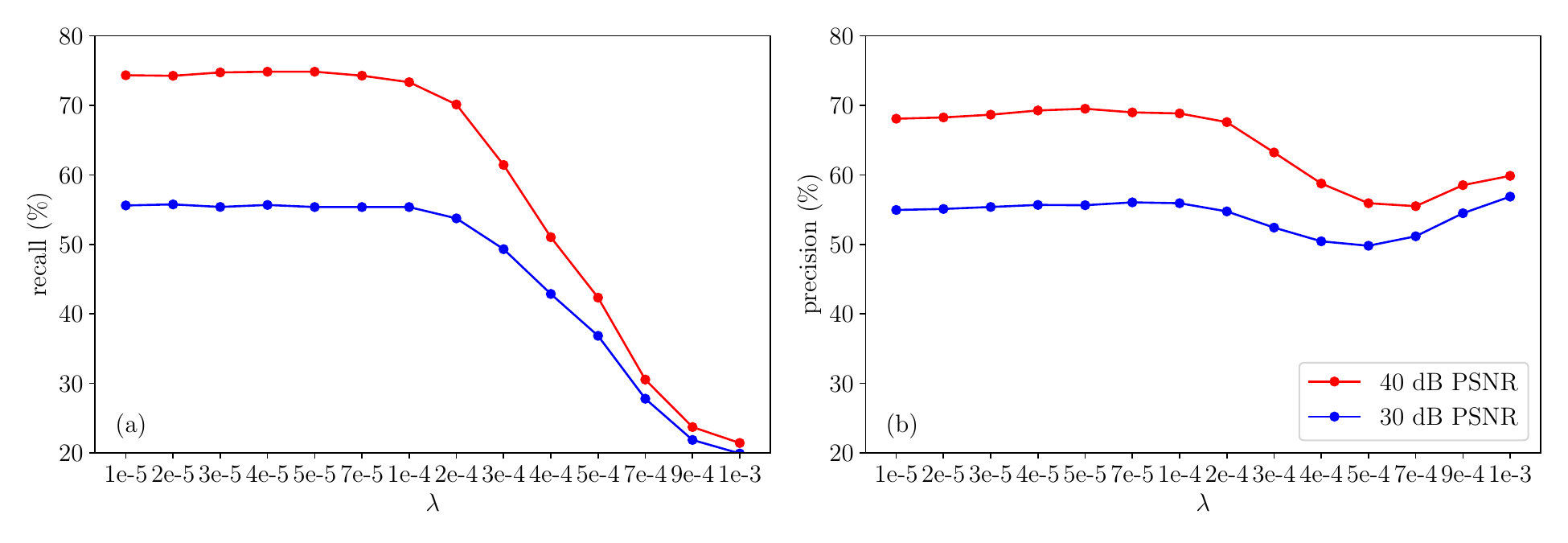}
    \caption{(a) Recall and (b) precision as a function of $\lambda$ at two different PSNRs.}
    \label{fig:lambda}
\end{figure}
\begin{figure}[t]
    \centering
    \includegraphics[width=0.85\linewidth]{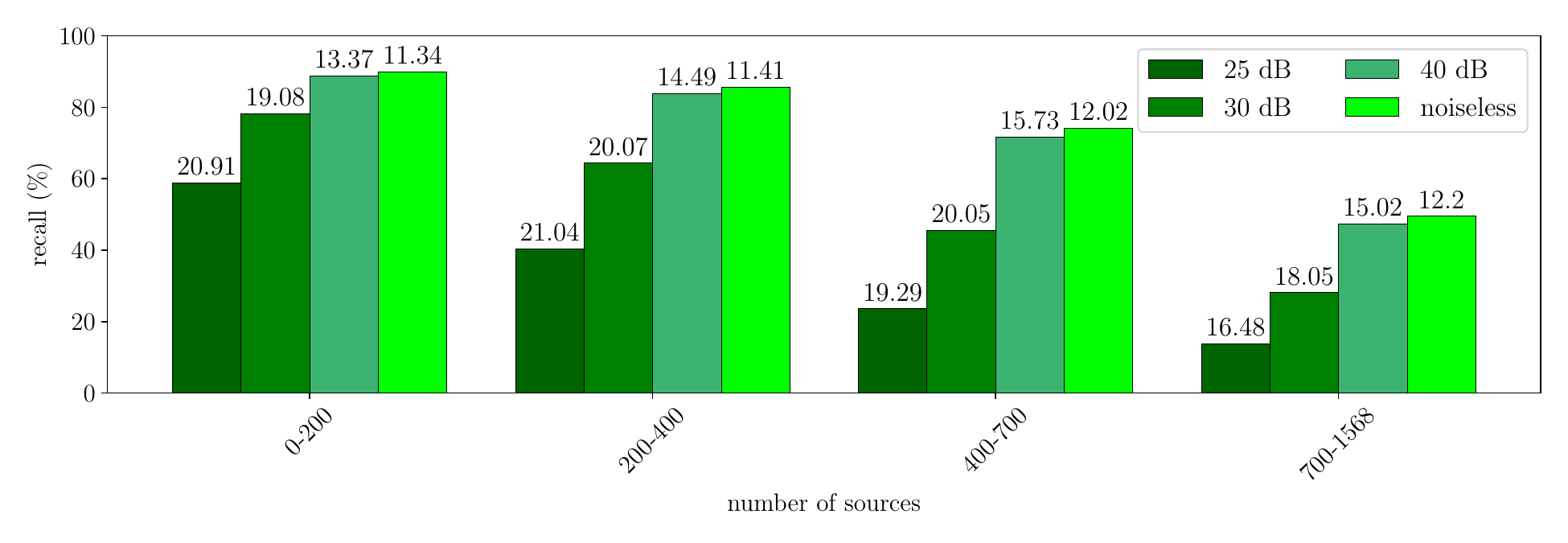}
    \caption{Recall for varying PSNR (in dB) with a microphone array radii $r=4.2~$cm and $f_s=24~$kHz, with the same dataset segmentation as in Fig. \ref{fig:recall_grid}.}
    \label{fig:recall_noise}
\end{figure}

Fig. \ref{fig:lambda} shows how the parameter $\lambda$ affects the quality of the reconstruction at two different noise levels. For low noise levels, the curves are practically flat around the chosen value of the parameter. $\lambda$ could be tuned to increase precision at the cost of recall, especially at high PSNR. However, whilst increasing $\lambda$ can greatly reduce false positives, it does not only reduces the recall for high order image sources, but also for first order sources. For some applications, such as room geometry reconstruction, the locations of first order sources are crucial, which justifies using a low regularization parameter at the expense of additional false positives.

Finally, Fig. \ref{fig:recall_noise} presents the recall obtained for different Peak to Signal Noise Ratios (PSNR). At $40$ PSNR we see little impact on the recall and errors, while the Euclidean errors tend to increase quickly at $30$ PSNR. However, this damages mainly the high order sources, for which the heights of the time-signals peaks are close to the standard deviation of the noise. Indeed, as highlighted in Table \ref{tab:metrics2} even at 30 PSNR we have a $98.8~\%$ recall rate for the first order sources, with an associated mean Euclidean error of $3.78~$cm. These results were obtained for $f_s=24~$kHz and $r=4.2~$cm and might be further improved by increasing the resolution.

\section{Conclusion and open issues}
In this article, we introduced an efficient algorithm to reconstruct the complete set of image sources associated with a room and, indirectly, its shape. We conclude by highlighting several open questions that remain to be explored in order to complete this study, as well as potential directions for future research.

One limitation of our algorithm is that {it requires a large number of initializations} making it computationally expensive. Developing an approach that leverages initializations informed by the configuration of image sources would be a valuable improvement. {Another limitation is that the responses of the walls, source and microphones are idealized. To make the approach applicable to real measurements, the underlying forward model should be extended to include responses that are both frequency- and direction-dependent.}

A more ambitious challenge {is to consider broader classes of room shapes beyond cuboids. One possible route is to extend the underlying image source model to more general polyhedra. One is then faced with the difficult issue of handling \textit{occlusions}, namely, each image source is no longer guaranteed to be visible by all microphones, calling for the use of a more robust data-fit objective. Another route would be to leave the proposed image-source localization paradigm, and directly formulate the inverse problem as a (parameterized) boundary recovery problem based on the wave equation. A shape-optimization algorithm incorporating the concept of Hadamard derivatives could then be employed, which is left as the subject of future work.}


\section*{Acknowledgments}
This work was supported by the French National Research Agency through the DENISE (ANR-20-CE48-0013) and STOIQUES projects.

\section*{Conflicts of interest}
We declare we have no competing interests.

\section*{Ethics statement}
Our study does not involve any ethical concerns as it does not include human participants, animals, or sensitive data.

\section*{Data access statement}
The code required to generate all data used in this study is publicly available at \urlstyle{tt}\url{https://github.com/Sprunckt/acoustic-sfw} and can be accessed for replication and further analysis.


\bibliographystyle{abbrv}
\bibliography{references}

\end{document}